\definecolor{webgreen}{rgb}{0,.5,0}
\definecolor{webbrown}{rgb}{.6,0,0}
\newcommand{\seqnum}[1]{\href{http://oeis.org/#1}{\underline{#1}}}
\newcommand{\supp}{{\rm supp}}
\begin{document}

\vspace*{2.1cm}

\theoremstyle{plain}
\newtheorem{theorem}{Theorem}
\newtheorem{corollary}[theorem]{Corollary}
\newtheorem{lemma}[theorem]{Lemma}
\newtheorem{proposition}[theorem]{Proposition}
\newtheorem{obs}[theorem]{Observation}
\newtheorem{claim}[theorem]{Claim}

\theoremstyle{definition}
\newtheorem{definition}[theorem]{Definition}
\newtheorem{example}[theorem]{Example}
\newtheorem{remark}[theorem]{Remark}
\newtheorem{conjecture}[theorem]{Conjecture}

\begin{center}

{\LARGE\bf Universal updates of Dyck-nest signatures}

\vskip 1cm
\large
Italo J. Dejter

University of Puerto Rico

Rio Piedras, PR 00936-8377

\href{mailto:italo.dejter@gmail.com}{\tt italo.dejter@gmail.com}
\end{center}

\begin{abstract}
Let $0<k\in\mathbb{Z}$.
 The anchored Dyck words of length $n=2k+1$ (obtained by prefixing a 0-bit to each Dyck word of length $2k$ and used to reinterpret the Hamilton cycles in the odd graph $O_k$ and the middle-levels graph $M_k$ found by M\"utze et al.) represent in $O_k$ (resp., $M_k$) the cycles of an $n$- (resp., $2n$-) 2-factor and its cyclic (resp., dihedral) vertex classes, and are equivalent to Dyck-nest signatures. A sequence is obtained by updating these signatures according to the depth-first order of a tree of restricted growth strings (RGS's), reducing the RGS-generation of Dyck words by collapsing to a single update the time-consuming $i$-nested castling used to reach each non-root Dyck word or Dyck nest.
This update is universal, for it does not depend on $k$.
\end{abstract}

\section{Introduction. Odd and middle-levels graphs}\label{s1}

Let $0<k\in\mathbb{Z}$, let $n=2k+1$ and let $O_k$ be the $k${\it -odd graph} \cite{B}, namely the graph whose vertices are the $k$-subsets of the cyclic group $\mathbb{Z}_n$ over the set $[0,2k]=\{0,1,\ldots, 2k\}$ and having an edge $uv$ for each two vertices $u,v$ if and only if $u\cap v=\emptyset$. The {\it characteristic vectors} of such subsets $u,v$ of $[0,2k]$ are the $n$-vectors $\vec{u},\vec{v}$ over $Z_2$ whose {\it supports} (i.e, the subsets of $[0,2k]$ composed by all nonzero entries of $u,v$), are exactly $u,v$, respectively. We may write $\vec{u},\vec{v}\in V(O_k)$, instead of $u,v\in V(O_k)$.
The set $V(O_k)$ of vertices of $O_k$ admits a partition into {\it cyclic classes} mod $n$, where two vertices $\vec{u},\vec{v}$ are in the same {\it class} if and only if they are related by a translation mod $n$, e.g., if $\vec{u}=u_0\cdots u_{2k}$, then $\vec{v}=u_iu_{i+1}\cdots u_{2k}u_0u_1\cdots u_{i-1}$, for some $i\in\mathbb{Z}_n=[0,2k]$. This is a translation that we denote by $i\in\mathbb{Z}_n$. The said cyclic classes mod $n$ are to be optionally used in our final result, Corollary~\ref{coro}.

We also consider
 the double covering graph $M_k$ of $O_k$, where $M_k$, referred to as {\it middle-levels graph}, is the subgraph of the Boolean lattice of subsets of $[0,2k]$ induced by the {\it levels} $L_k$ ($=V(O_k)$) and $L_{k+1}$, formed by the binary $n$-strings of weight $k$ and $k+1$, respectively  \cite{D2,D1,D3}.
 Two vertices $u\in L_k$ and $v\in L_{k+1}$ of $M_k$ are adjacent in $M_k$ if and only if $u\subset v$, with $u$ and $v$ taken as subsets of $[0,2k]$.
 The {\it double-covering graph map} $\Theta:M_k\rightarrow O_k$ restricts to the identity map over $L_k$ and to the {\it reversed complement} bijection $\aleph$ over $L_{k+1}$, that is: if $v\in L_{k+1}$ has characteristic vector $\vec{v}=v_0v_1\cdots v_{2k-1}v_{2k}$, then $\Theta(v)=\aleph(v)$ has characteristic vector $\bar{v}_{2k}\bar{v}_{2k-1}\cdots\bar{v}_1\bar{v}_0$ in $V(O_k)$, where $\bar{0}=1$ and $\bar{1}=0$.
To the partition of $V(O_k)$ into cyclic classes mod $n$, or $\mathbb{Z}_n$-classes, corresponds a partition of $V(M_k)=L_k\cup L_{k+1}$ into {\it dihedral classes}, or {\it $\mathbb{D}_n$-classes}, where $\mathbb{D}_n\supset\mathbb{Z}_n$ is the dihedral group of order $2n$.

An $n$-string $\Psi=0\psi_1\cdots\psi_{2k}$ in the alphabet $[0,n]$ in which each nonzero entry appears exactly twice is seen as a concatenation $W^i|X|Y|Z^i$ of substrings $W^i,X,Y$ and $Z^i$, where $W^i$ and $Z^i$ have length $i$, for some $0<i<k$.
In that case, the $n$-string $W^i|Y|X|Z^i$ is said to be a {\it $i$-nested castling} of $\Psi$ (time-consuming as it swaps parts of $\Psi$, with many position changes).

A {\it $k$-factor} of a graph $G$ is a spanning $k$-regular subgraph. A {\it $k$-factorization} is a partition of $E(G)$ into disjoint $k$-factors.
 A 2-factor (or {\it cycle factor} \cite{u2f}) in $O_k$ formed by $n$-cycles, with a pullback 2-factor in $M_k$ of $2n$-cycles via $\Theta^{-1}$, and used in constructing Hamilton cycles \cite{Hcs} and optionally in Corollary~\ref{coro} below, was analyzed in \cite{D3} from the viewpoint of restricted growth strings (RGS's \cite[p. \ 325]{Arndt}), which form the {\it RGS-tree} $\mathcal T$ of Lemma~\ref{infty}, below.

 In Section~\ref{s}, a modification of the arguments of \cite{D3} shows that such RGS's exert control over the Dyck paths of length $n$, that represent bijectively the cyclic (resp., dihedral) classes of vertices of $O_k$ (resp., $M_k$). These paths, viewed as {\it Dyck nests}, defined in Subsection~\ref{r4}, were related via the (time-consuming) $i$-nested castling operation controlled by the RGS-tree $\mathcal T$ that yields each non-root Dyck nest from its parent nest (\cite{D2,D1,D3}, or Theorem~\ref{thm1}) in the reinterpretation of the Hamilton cycle constructions in $O_k$ \cite{Hcs} and $M_k$ \cite{gmn,M}.

Such RGS-control will be reduced below, first by
viewing each Dyck nest as its {\it signature}, defined in Subsection~\ref{r} and shown to be equivalent to that Dyck nest in Theorem~\ref{signest}, and second by collapsing each $i$-nested castling to a {\it universal} single (one-step) update of the signature of each non-root Dyck nest from the signature of its parent nest in the RGS-tree $\mathcal T$. The term {\it universal}, introduced in Theorem~\ref{ell}, is taken in the sense that the integers representing such updates do not depend on the values of $k$, so that those integers are valid and unique for all concerned $O_k$'s and $M_k$'s.
The sequence formed by all such updates, controlled by the RGS-tree $\mathcal T$, is presented in Theorem~\ref{alfin}, accompanied by the sequence of their corresponding locations in Corollary~\ref{coro}, leading to its asymptotic analysis (Subsection~\ref{Remarque}).

\section{Restricted growth strings and i-nested castling}\label{s}

The {\it $k$-th Catalan number} \cite{oeis} \seqnum{A000108} is given by $C_k=\frac{(2k)!}{k!(k+1)!}$. Let $\mathcal S$ be the {\it sequence of RGS's} \cite{oeis} \seqnum{A239903}. It was shown in \cite{D2,D1} that the first $C_k$ terms of $\mathcal S$ represent both the Dyck words of length $2k$ and the {\it extended} Dyck words of length $n$, obtained by prefixing a 0-bit to each Dyck word, and yielding a sole corresponding Dyck path (Subsection~\ref{r4}).

The sequence ${\mathcal S}=(\beta(i))_{0\le i\in\mathbb{Z}}$ starts as ${\mathcal S}=(\beta(0),\ldots,\beta(17),\ldots)=$ $$(0,1,10,11,12,100,101,110,111,112,120,121,122,123,1000,1001,1010,1011,\ldots)$$ and has the lengths of any two contiguous terms $\beta(m-1)$ and $\beta(m)$, ($1\le m\in\mathbb{Z}$), constant unless $m=C_k$, for some $k>1$, in which case $\beta(m-1)=\beta(C_k-1)=12\cdots k$ has length $k$, and $\beta(m)=\beta(C_k)=10^k=10\cdots 0$ has length $k+1$.

To work in middle-levels and odd graphs in relation to their Hamilton cycles~\cite{gmn,M,Hcs}, RGS's were tailored as {\it germs} in \cite{D2,D1,D3}. A $k$-{\it germ} ($k>1$) is a $(k-1)$-string $\alpha=a_{k-1}a_{k-2}\cdots a_2a_1$ such that:

\begin{enumerate}
\item[\bf(a)] the leftmost position of $\alpha$,  namely position $k-1$, contains the entry $a_{k-1}\in\{0,1\}$;
\item[\bf(b)] given $1<i<k$, the entry $a_{i-1}$ at position $i-1$ satisfies $0\le a_{i-1}\le a_i+1$.
\end{enumerate}

Each RGS $\beta=\beta(m)$, where $0\le m\in\mathbb{Z}$, is transformed, for every $k\in\mathbb{Z}$ such that $k\ge$ length$(\beta)$, into a $k$-germ $\alpha=\alpha(\beta,k)=\alpha(\beta(m),k)$ by prefixing $k-$ length$(\beta)$ zeros to $\beta$.

Every $k$-germ $a_{k-1}a_{k-2}\cdots a_2a_1$ yields the $(k+1)$-germ $0a_{k-1}a_{k-2}\cdots$ $a_2a_1$.
A {\it non-null} RGS is obtained by stripping a $k$-germ $\alpha=a_{k-1}a_{k-2}\cdots a_2a_1$ $\ne 00\cdots 0$ of all the zeros to the left of its leftmost position containing a 1. We denote such an RGS still by $\alpha$,
say that the {\it null} RGS $\alpha=0$ represents all null $k$-germs $\alpha$, ($0<k\in\mathbb{Z}$), and use $\alpha=\alpha(m)$, or $\beta=\beta(m)$, both for a $k$-germ and for its corresponding RGS. In fact, $\alpha=\alpha(m)$, or $\beta=\beta(m)$, will be considered to be the RGS representing all the $k$-germs $\alpha=\alpha(m)$, or $\beta=\beta(m)$, respectively, ($0<k\in\mathbb{Z}$) leading to $\alpha$, or $\beta$, as an RGS, by stripping their zeros as indicated.

If $a,b\in\mathbb{Z}$, then let

\begin{enumerate}
\item[\bf(1)] $[a,b]=\{j\in\mathbb{Z} ; a\le j\le b\}$; \hspace{1cm}{\bf(2)} $[a,b[=\{j\in\mathbb{Z} ; a\le j<b\}$;
\item[\bf(3)] $]a,b]=\{j\in\mathbb{Z} ; a<j\le b\}$; \hspace*{1cm}{\bf(4)} $]a,b[=\{j\in\mathbb{Z}; a<j<b\}$.\end{enumerate}

Given two $k$-germs $\alpha=a_{k-1}\cdots a_1$ and $\beta=b_{k-1}\cdots b_1,$ where $\alpha\ne \beta$, we say that $\alpha$ precedes $\beta$, written $\alpha<\beta$, whenever either

\begin{enumerate}
\item[\bf (i)] $0=a_{k-1} < b_{k-1}=1$ or
\item[\bf (ii)] $\exists i\in[1,k[$ such that $a_i < b_i$ with $a_j=b_j$, $\forall j\in]i,k[$.
\end{enumerate}

 The resulting order of $k$-germs yields a bijection from $[0,C_k[$ onto the set of $k$-germs that assigns each $m\in[0,C_k[$ to a corresponding $k$-germ $\alpha=\alpha(m)$. In fact, there are exactly $C_k$ $k$-germs $\alpha=\alpha(m)<10^k$, $\forall k>0$. Moreover, we have the following trees $\mathcal T_k$, correspondences $F(\cdot)$ and RGS-tree $\mathcal T$ (this one, partially exemplified in display (\ref{treetau}) via its section for $k\le 5$).

\section{Ordered trees of k-germs and Dyck words}

We recall from \cite[Theorem 3.1]{D2} or \cite[Theorem 1]{D1} that the $k$-germs are the nodes of an ordered tree ${\mathcal T}_k$ rooted at $0^{k-1}$ and such that each $k$-germ $\alpha=a_{k-1}\cdots a_2a_1\ne0^{k-1}$ with rightmost nonzero entry $a_i$  ($1\le i=i(\alpha)<k$) has parent $\beta(\alpha)=b_{k-1}\cdots b_2b_1\!<\alpha$  in ${\mathcal T}_k$ with $b_i= a_i-1$ and $a_j=b_j$, for every $j\ne i$ in $[1,k-1]$.

\begin{lemma}\label{infty} By considering $k$-germs as RGS's, an infinite chain ${\mathcal T}_2\subset{\mathcal T}_3\subset\cdots\subset{\mathcal T}_k\subset\cdots$ of finite trees converges to their union, the RGS-tree $\mathcal T$.\end{lemma}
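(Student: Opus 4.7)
The plan is to exhibit a natural inclusion $\iota_k:{\mathcal T}_k\hookrightarrow {\mathcal T}_{k+1}$ induced by the $0$-prefixing operation on germs, show that $\iota_k$ preserves both nodes and parent-child edges, and then argue that every RGS appears in ${\mathcal T}_k$ for all sufficiently large $k$, so that the direct limit is the RGS-tree ${\mathcal T}$.

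First, recall that every $k$-germ $\alpha=a_{k-1}\cdots a_1$ yields the $(k+1)$-germ $\iota_k(\alpha)=0a_{k-1}\cdots a_1$, and both germs strip down to the same RGS (since the prefixed zero just adds a leading zero that is removed). Thus $\iota_k$ is well-defined on representatives and induces the identity on the associated RGS. I would verify that conditions (a) and (b) of the germ definition are preserved: the new leading entry is $0\in\{0,1\}$, and the inequality $a_{i-1}\le a_i+1$ holds inside $\alpha$ by hypothesis, while at the new boundary it reads $a_{k-1}\le 0+1=1$, which is automatic. Hence $\iota_k$ sends the node set of ${\mathcal T}_k$ injectively into the node set of ${\mathcal T}_{k+1}$.

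Next, the parent function on ${\mathcal T}_k$ (and on ${\mathcal T}_{k+1}$) depends only on the position $i=i(\alpha)$ of the rightmost nonzero entry, which it decrements by one, leaving every other entry fixed. Prefixing a $0$ does not change the rightmost nonzero entry nor its position counted from the right, so $i(\iota_k(\alpha))=i(\alpha)$ and the parent of $\iota_k(\alpha)$ in ${\mathcal T}_{k+1}$ is exactly $\iota_k(\beta(\alpha))$. In particular, the root $0^{k-1}$ of ${\mathcal T}_k$ maps to the root $0^k$ of ${\mathcal T}_{k+1}$, both representing the null RGS. Therefore $\iota_k$ is a rooted-tree embedding, and reading everything through the RGS identification gives a genuine inclusion ${\mathcal T}_k\subset{\mathcal T}_{k+1}$.

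Finally, every RGS $\beta$ is, by definition, a finite string, and can be padded by leading zeros to a $k$-germ for every $k\ge \text{length}(\beta)$; so $\beta$ appears as a node of ${\mathcal T}_k$ for all such $k$. Hence $\bigcup_{k\ge 2}{\mathcal T}_k$ contains every RGS, and together with the edge compatibility just established this union is a well-defined rooted tree whose node set is the set of all RGS's and whose parent function is the common one: this is precisely ${\mathcal T}$. The only mildly delicate point, and the one I would write out carefully, is the invariance of the parent function under $0$-prefixing — everything else is a bookkeeping argument about stripping leading zeros.
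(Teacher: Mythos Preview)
Your proof is correct and follows the same approach as the paper, which simply asserts that ``iterative inclusion of the successive trees $\mathcal T_k$ tends to the RGS-tree, as $k$ converges to infinity, where the original $k$-germs are considered as RGS as indicated.'' Your argument is in fact considerably more detailed than the paper's one-sentence proof: you explicitly verify that $0$-prefixing preserves the germ conditions, that the parent map commutes with the embedding, and that every RGS eventually appears, whereas the paper takes all of this as implicit from the surrounding discussion.
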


\begin{proof} Iterative inclusion of the successive trees $\mathcal T_k$ tends to the RGS-tree, as $k$ converges to infinity, where the original $k$-germs are considered as RGS as indicated.\end{proof}

\begin{theorem}\label{thm1}
To each $k$-germ $\alpha=a_{k-1}\cdots a_1$ corresponds an $n$-string $F(\alpha)$ with initial entry $0$ and having each $j\in[1,k]$ as an entry exactly twice. Moreover, $$F(0^{k-1})=012\cdots(k-2)(k-1)kk(k-1)\cdots 21,\;(e.g.,\; F((0)=011,\; F(00)=01221).$$ Furthermore, if $\alpha\ne 0^{k-1}$, let
\begin{enumerate}
\item $W^i$ and $Z^i$ be the leftmost and rightmost, respectively, substrings of length $i=i(\alpha)$ in $F(\beta)$, where $\beta$ is the parent of $\alpha$ in $\mathcal T_k$;
\item $c>0$ be the leftmost entry of $F(\beta)\setminus(W^i\cup Z^i)$, and
\item $F(\beta)\setminus(W^i\cup Z^i)$ be the concatenation $X|Y$, where $Y$ starts at the entry $c+1$ of $F(\beta)$.
\end{enumerate}
\noindent then $F(\alpha)=W^i|Y|X|Z^i$ is the $i$-nested castling of $F(\beta)=W^i|X|Y|Z^i$. In addition, $W^i$ is an ascending $i$-substring, $Z^i$ is a descending $i$-substring, and $kk$  is a substring of $F(\alpha)$.
\end{theorem}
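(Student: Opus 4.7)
The plan is to define $F(\alpha)$ recursively along the tree $\mathcal{T}_k$ and to verify the stated properties by induction on the depth of $\alpha$ in $\mathcal{T}_k$ (equivalently, on the index $m$ with $\alpha=\alpha(m)$).

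For the base case $\alpha = 0^{k-1}$, I would set $F(0^{k-1}) := 012\cdots(k-1)kk(k-1)\cdots 21$ and verify by direct inspection that this is an $n$-string with initial entry $0$, with each $j \in [1,k]$ occurring exactly twice (at positions $j$ and $2k+1-j$), with the $kk$ block at positions $k$ and $k+1$, and such that for every $i \in [1,k[$ the leftmost $i$-substring $01\cdots(i-1)$ is strictly ascending and the rightmost $i$-substring $i(i-1)\cdots 1$ is strictly descending.

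For the inductive step, let $\alpha \ne 0^{k-1}$ with $i = i(\alpha)$ and parent $\beta$ in $\mathcal{T}_k$. By induction $F(\beta)$ already enjoys the structural properties above. Since $i < k$, the middle portion $F(\beta) \setminus (W^i \cup Z^i)$ has length $n-2i \ge 3$, so the leftmost entry $c$ of this portion exists; I would then verify that the value $c+1$ also occurs in this middle portion, so that the decomposition $F(\beta) = W^i|X|Y|Z^i$ with $Y$ starting at the first occurrence of $c+1$ is well-defined. Defining $F(\alpha) := W^i|Y|X|Z^i$ as the $i$-nested castling of $F(\beta)$, the conditions of length $n$, initial entry $0$, and each $j \in [1,k]$ appearing exactly twice transfer immediately because the castling merely permutes the contiguous blocks $X$ and $Y$. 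The leftmost and rightmost $i$-substrings of $F(\alpha)$ coincide literally with $W^i$ and $Z^i$ of $F(\beta)$, so their monotonicity is inherited.

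The main obstacle is verifying that $kk$ remains a substring of $F(\alpha)$. Both copies of $k$ lie inside $X|Y$, since $W^i$ and $Z^i$ are strictly monotone of length less than $k$ and cannot host a repeated entry. The danger is that the $X|Y$ boundary could split the $kk$ pair, which would require $c+1 = k$ together with $X$ ending in $k$. I would rule this out by strengthening the inductive invariant to record that, for every admissible child-index $i' \in [1,k[$ arising at $\beta$, the $kk$ block in $F(\beta)$ lies strictly on a single side of the corresponding $X|Y$ boundary. Establishing this strengthened invariant and propagating it through each inductive step, using the rule that $i(\alpha)$ is the rightmost nonzero position of $\alpha$ together with the monotone prefix/suffix structure carried by all $F(\beta)$, is the principal technical work of the proof.
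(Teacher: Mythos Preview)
Your inductive scheme along $\mathcal{T}_k$ is exactly the natural approach, and it is what the paper is invoking: the paper gives no self-contained argument here but simply refers to the analogous theorems in \cite{D2,D1,D3}, where the same recursion is carried out (with the second appearance of each $j\in[1,k]$ written as $*$ or $=$ rather than as a repeated numeral). So methodologically there is nothing to contrast.

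One remark on where the weight actually sits. The $kk$-preservation you single out is lighter than your sketch suggests. The induction delivers the sharper invariant $W^{i}=0\,1\cdots(i-1)$ and $Z^{i}=i\,(i-1)\cdots 1$ (not merely ``monotone''), because every child $\gamma$ of $\alpha$ has $i(\gamma)\le i(\alpha)$ and castling leaves both the length-$i(\alpha)$ prefix and suffix untouched. Hence neither copy of $k$ lies in $W^{i}$ or $Z^{i}$, so $k'k''$ sits inside $X|Y$; and since $Y$ begins at the \emph{first} occurrence of $c+1$, the block cannot be split by the $X|Y$ cut (if $c+1=k$ then $Y$ begins at $k'$ and contains $k'k''$; if $c+1<k$ the boundary entry is not $k$). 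No auxiliary invariant ranging over all future child-indices is needed. The step that genuinely requires care is the one you wave through with ``I would then verify that $c+1$ also occurs in this middle portion'': this is equivalent to $c<k$, i.e.\ the middle block of $F(\beta)$ must not begin with $k'$. Ruling that out means tracking how the value at position $i(\alpha)$ of $F(\beta)$ evolves along the tree and relating it to the RGS constraints $a_{k-1}\le 1$ and $a_{j-1}\le a_j+1$. That bookkeeping, rather than the $kk$ question, is where the substance of the cited proofs lies.
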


\begin{proof} The proof is a slight modification of that of \cite[Theorem 3.2]{D2} or \cite[Theorems 2]{D1}, where the rightmost appearances of each integer of $[1,k]$ in every $F(\alpha)$ as in the statement were given as asterisks, *, or in \cite[Theorem 2]{D3} as equal signs, =.
\end{proof}

The disposition of RGS's in an initial section of the RGS-tree of Lemma~\ref{infty} (for $k\le 5$) is shown in display (\ref{treetau}), where the children of an RGS $\alpha$ at any level are disposed from left to right in the subsequent level, starting just below $\alpha$:

\begin{align}\label{treetau}
\begin{array}{rrrrrrrrrrrrrr}
0&&&&&&&&&&&&&\\
1&10&100&&&1000&&&&&&&&\\
  &11&101&110&&1001&1010&1100&&&&&&\\
   &12&     &111&120&&1011&1101&1110&&1200&&&\\
   &    &     &112&121&&1012&&1111&1120&1201&1210&&\\
   &    &     &      &122&&&&1112&1121&&1211&1220&\\
   &    &     &      &123&&&&&1122&&1212&1221&1230\\
   &    &     &      &&&&&&1123&&&1222&1231\\
   &    &     &      &&&&&&&&&1223&1232\\
   &    &     &      &&&&&&&&&&1233\\
   &    &     &      &&&&&&&&&&1234\\
      \end{array}
\end{align}

\section{Dyck words, k-germs and 1-factorizations}\label{nat}

A {\it binary $k$-string} (or {\it $k$-bitstring} \cite{gmn,M,Hcs}) is a sequence of length $k$ whose terms are the digits 0, called 0-{\it bits}, and/or 1, called 1-{\it bits}, respectively. The {\it weight} of a binary $k$-string is its number of 1-bits.

In this work, a {\it Dick word of length} $2k$ is defined as a binary $2k$-string
of weight $k$ such that in every prefix the number of 0-bits is at least equal to the number of 1-bits
 (differing from the Dyck words of \cite{Hcs} in which the number of 1-bits is at least the number of 0-bits).

 The concept of {\it empty Dyck word}, denoted $\epsilon$, whose weight is 0, also makes sense in this context. We will present each Dyck word as its associated {\it anchored Dyck word}, obtained by prefixing a 0-bit to it. In particular, $\epsilon$ is represented by the anchored Dyck word 0.

For each $k$-germ $\alpha$, where $k>1$, we define the binary string form $f(\alpha)$ of $F(\alpha)$ by replacing each first appearance of an integer $j\in[0,k]$ as an entry of $F(\alpha)$ by a 0-bit and the second appearance of $j$, in case $j\in[1,k]$, by a 1-bit (where 0-bits and 1-bits correspond respectively to the 1-bits and 0-bits used in \cite{Hcs}). Such $f(\alpha)$ is a binary $n$-string of weight $k$, namely an anchored Dyck word of length $n$ whose {\it support} $\supp(f(\alpha))$ is a vertex of $O_k$ and an element of $L_k$, while $\aleph(f(\alpha))$ is an element of $L_{k+1}$. Note that the pair $\{f(\alpha),\aleph(f(\alpha))\}$ together with the $\mathbb{Z}_n$-class of $f(\alpha)$ in $L_k$ ($=V(O_k)$) generate the $\mathbb{D}_n$-class of $f(\alpha)$ in $V(M_k)$. Thus, $f(\alpha)$ represents both a ${\mathbb Z}_n$-class of $V(O_k)$ and a ${\mathbb D}_n$-class of $V(M_k)$, which has Hamilton cycles lifted from those in $O_k$ \cite{Hcs,D3}, or independently, as in \cite{gmn,M,D2,D1}

\subsection{Dyck paths}\label{r4}

Each anchored Dyck word $f(\alpha)$ yields a {\it Dyck path} \cite{D3} obtained as a curve $\rho(\alpha)$ that grows from $(0,0)$ in the Cartesian plane $\Pi$ via the successive replacement of the 0-bits and 1-bits of $f(\alpha)$, from left to right, by {\it up-steps} and {\it down-steps}, namely segments $(x,y)(x+1,y+1)$ and $(x,y)(x+1,y-1)$, respectively. We assign the integers of the interval $[0,k]$ in decreasing order (from $k$ to 0) to the up-steps of $\rho(\alpha)$, from the top unit layer intersecting $\rho(\alpha)$ to the bottom one and from left to right at each concerning unit layer between contiguous lines $y,y+1\in\mathbb{Z}$, where $0\le y\in\mathbb{Z}$.
These assigned integers correspond to their leftmost appearances as entries of $F(\alpha)$.
Each leftmost appearance $j'$ of an integer $j\in[1,k]$ in $F(\alpha)$ corresponds to the starting entry of a Dyck subword $0u1v$ in $f(\alpha)$, where $u,v$ are Dyck subwords (possibly $\epsilon$). The Dyck subword $0u1v$ corresponds in $F(\alpha)$ to a substring $j'Uj''V$, where $U$ and $V$ correspond to $u$ and $v$, respectively, and $j''=j'\in[1,k]$.

\begin{figure}[htp]
\includegraphics[scale=0.47]{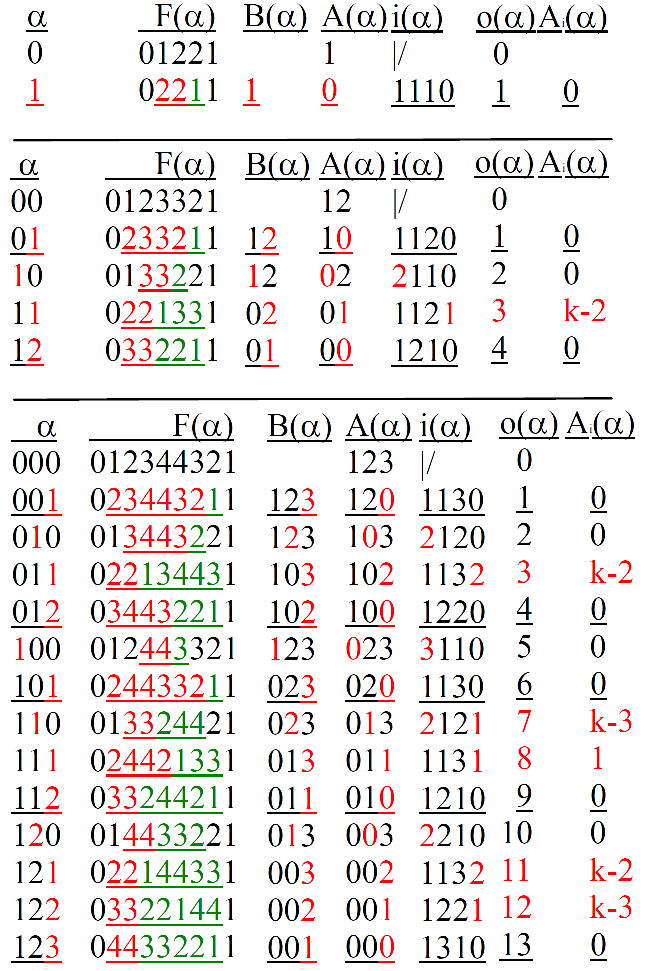}
\includegraphics[scale=0.49]{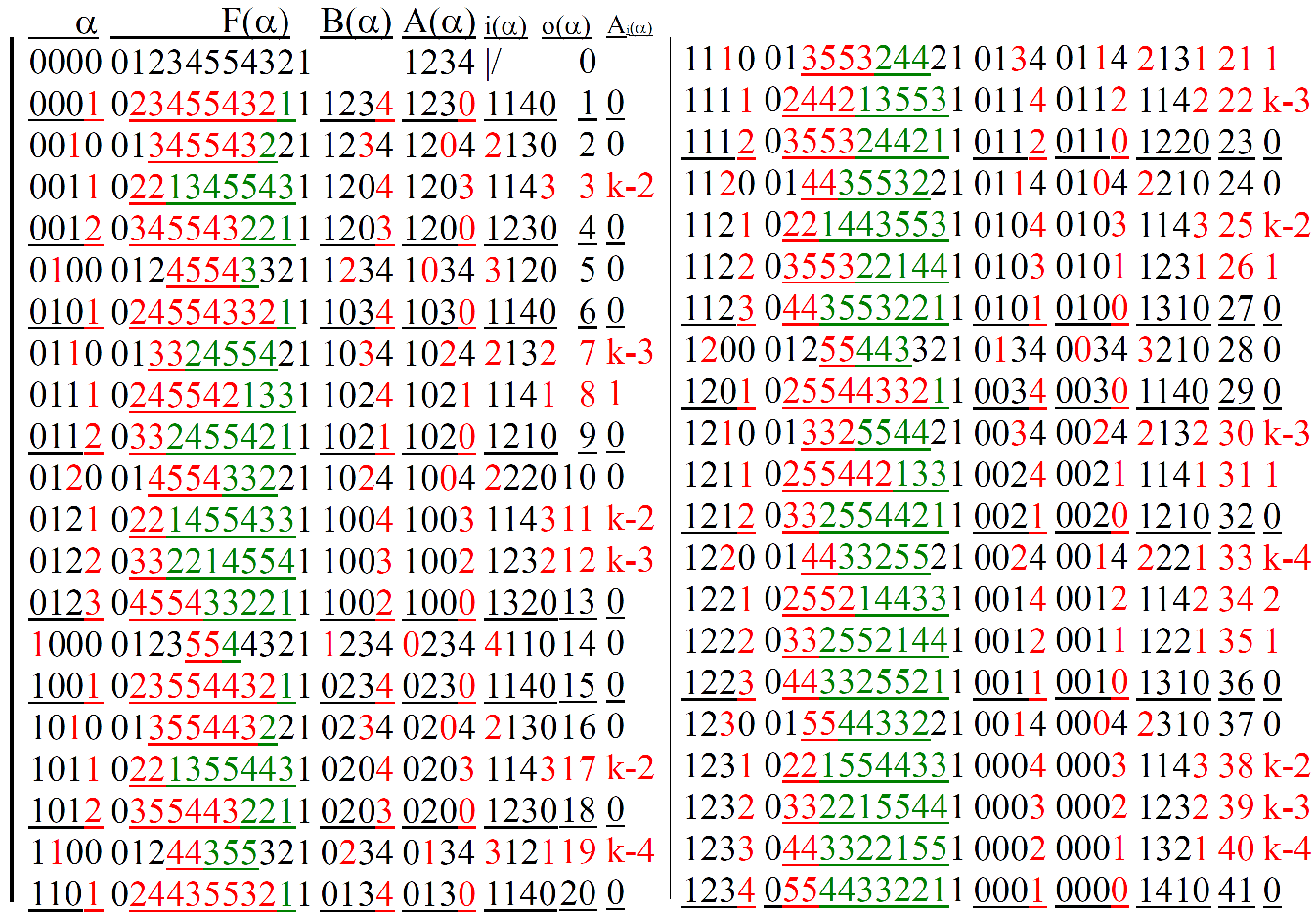}
\caption{List of $k$-germs $\alpha$, $n$-nests $F(\alpha)$, signatures and update entries, for $k=2,3,4,5$.}
\label{fig1}
\end{figure}

Each edge $uv$ of $O_k$ is taken as the union of a pair of {\it arcs} $\vec{uv}$ and $\vec{vu}$, that is a pair of oriented edges with {\it sources} $u$ and $v$ and {\it targets} $v$ and $u$, respectively.
Let us see that each first appearance of an integer $i\in[0,k]$ in $F(\alpha)$ (that we refer to as {\it color $i$}) determines uniquely an arc of $O_k$ and two edges of $M_k$.
The $n$-strings $F(\alpha)$ of Theorem~\ref{thm1} will be said to be {\it Dyck nests} of length $n$, or $n$-{\it nests}.
Say $u\in V(O_k)$ belongs to a Dyck nest $F(\alpha)$, seen as a $\mathbb{Z}_n$-class of $O_k$, and that $i'\in[0,k]$ is the first appearance of an integer $i$ in $F(\alpha)$. Then, there is a unique vertex $v$ in a $\mathbb{Z}_n$-class of $O_k$ corresponding to a Dyck nest $F(\alpha')$ such that $uv$ is an edge of $O_k$ and $u$ has its $i$-colored entry $i'$ in the same position as the entry with color $k-i$ in $v$, so we say that the {\it color of the arc} $\vec{uv}$ is $i$. In that case, the arc $\vec{vu}$ has color $k-i$, allowing to recover $u$ from $v$ as the unique vertex of $O_k$ such that to the entry of $v$ with color $k-i$ corresponds the entry in the same position in $u$ with color $k-(k-i)=i$. Thus, we say $\vec{uv}$ has color $i$ and $\vec{vu}$ has color $k-i$, this being the {\it supplementary color} of $i$ in $[0,k]$. The inverse images $\Theta^{-1}$ of $\vec{uv}$ and $\vec{vu}$ are formed by an arc from $L_k$ to $L_{k+1}$ and another arc from $L_{k+1}$ onto $L_k$ (see Example~\ref{ex}); they end up yielding a pair of edges in $M_k$.

\begin{example}\label{ex}
The translations $j\in\mathbb{Z}_n$ act on any anchored Dyck word $f(\alpha)$, yielding binary $n$-strings $f(\alpha).j$, so $f(\alpha).0=f(\alpha)$ itself.
This notation is also used for $n$-nests $F(\alpha)$.
Given $u=f(000).0=000001111\in O_4$, the arc color $i=3\in[0,4]$ determines an arc $\vec{uv}$ with source $u$ and target $v=f(001).5=111010000$. This information can be arranged as follows:

\begin{align}\label{display1}\begin{array}{|c|c|c|c|c|c|c|c|}
\alpha&j&F(\alpha).j&f(\alpha).j&O_k&^{L_4}\searrow\,_{L_{5}}&\aleph&^{L_{5}}\searrow\,_{L_4}\\\hline
000&0&012{\bf 3}44321&000{\bf 0}01111&u=5678&000{\bf 0}01111&\leftrightarrow&00001{\bf 1}111\\
001&5&432{\bf 1}10234&111{\bf 0}10000&v=0124&000{\bf 1}01111&\leftrightarrow&00001{\bf 0}111\\
\end{array}\end{align}
Display (\ref{display1}) shows from left to right: the 4-germs $\alpha$ for the source $u$ and target $v$ (columnwise) of the arc $\vec{uv}$; the corresponding translations $j\in\mathbb{Z}_9$; the $\mathbb{Z}_9$-translated Dyck nests $F(\alpha).j$, where the $i$-th entries are shown in bold trace; the $\mathbb{Z}_9$-translated anchored Dyck words $f(\alpha).j$, where the $i$-th entries are again shown in bold trace; and the two edges in the double covering $M_4$ of $O_4$ projecting onto $\vec{uv}$, which are related via $\aleph$.
\end{example}

\begin{figure}[htp]
\hspace*{10mm}
\includegraphics[scale=1.35]{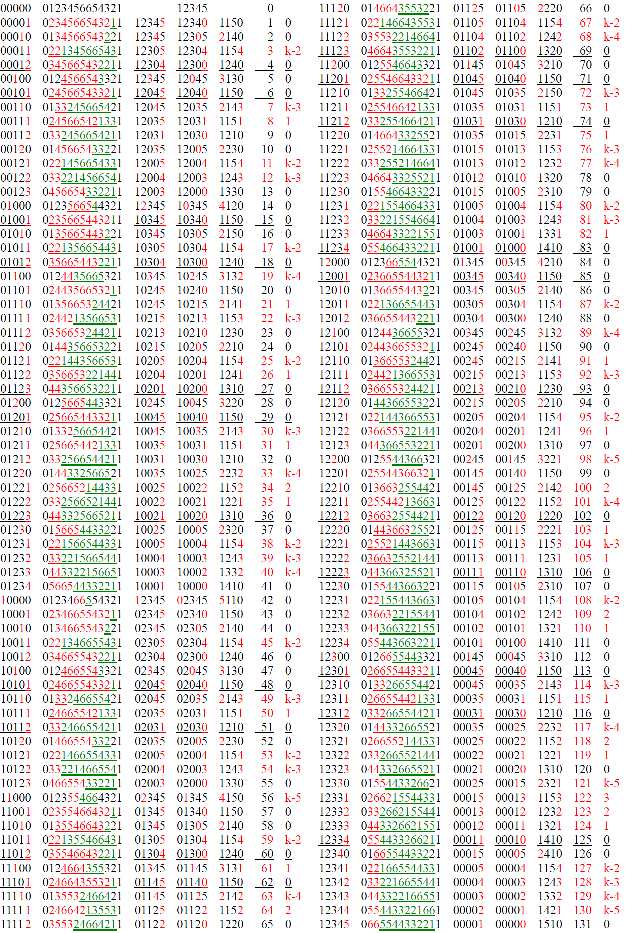}
\caption{List of $k$-germs $\alpha$, $n$-nests $F(\alpha)$, signatures and update entries, for $k=6$.}
\label{fig2}
\end{figure}

\subsection{Arc coloring and 1-factorizations}

Note that there is a coloring (or partition) of the set of arcs of $O_k$ resulting from Subsection~\ref{r4} and exemplified in Example~\ref{ex}. It induces a 1-factorization of $M_k$ into $(k+1)$ 1-factors, each formed by the edges whose arcs from $L_k$ to $L_{k+1}$ are colored with a corresponding integer of $[0,k]$. This factorization is known as the {\it modular} 1-factorization of $M_k$ \cite{D3}. In contrast, a different 1-factorization known as the {\it lexical} 1-factorization of $M_k$  \cite{gmn} exists. This is presented and exemplified in Example~\ref{kj}.

\begin{example}\label{kj}
Continuing as in Example~\ref{ex} but with $M_k$ rather than $O_k$, we modify and, instead of coloring with $k-i\in[0,k]$ the arc $\vec{uv}$ determined by the first appearance of $i\in[0,k]$ in the Dyck nest $F(\alpha)$ of each vertex $u$ of $M_k$ in $L_k$, we now color $\vec{uv}$ with $i\in[0,k]$, so that a 1-factorization of $M_k$ is determined, namely the lexical one \cite{gmn} mentioned above, with $\vec{vu}$ also colored with $i$. This is exemplified as follows, where $k=4$, color $i=3\in[0,4]$, and $\alpha=000$, so that $u=f(\alpha).0=f(\alpha)=000{\bf 0}01111$ (with the $i$-th entry in bold trace) is sent by $\aleph$  onto $\aleph(u)=00001{\bf 1}111\in L_5$:
\begin{align}\label{display2}\begin{array}{|c|c|c|c|c|c|c|}
V(M_4)&\alpha&j&F(\alpha).j&f(\alpha).j&\aleph&\aleph(f(\alpha).j)\\\hline
L_4&000&0&012{\bf 3}44321&000{\bf 0}01111&\leftrightarrow& 00001{\bf 1}111\in L_5\\
L_5&100&8&123{\bf 3}44210&000{\bf 1}01111&\leftrightarrow& 00001{\bf 0}111\in L_4\\
\end{array}\end{align}
In display (\ref{display2}), the corresponding edges from $u$ and $\aleph(u)$ end up onto $v=\aleph(w)=000{\bf 1}01111\in L_5$ and $w=\aleph^{-1}(v)=f(100).8=00001{\bf 0}111\in L_4$. These are the edges $uv=u\aleph(w)$ and $\aleph(u)v$
with both oppositely oriented arcs in each case having the same (lexical) color $i$, which differs with the modular-color situation in Subsection~\ref{r4} and Example~\ref{ex} (that is: with the colors $i$ and $k-i$ of the arcs of each edge differing as supplementary colors in $[0,k]$).
\end{example}

\section{Dyck nests and signatures}\label{DN&ts}

\begin{theorem}\label{t3} Each anchored Dyck word $w$ of length $n$ is the binary string $f(\alpha)$ associated to an $n$-nest $F(\alpha)$ obtained via the procedure of Theorem~\ref{thm1} from a specific $k$-germ $\alpha=\alpha(w)$.
\end{theorem}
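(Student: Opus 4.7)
The plan is to prove Theorem~\ref{t3} by showing that $\alpha\mapsto f(\alpha)$ is a bijection from the set of $k$-germs onto the set of anchored Dyck words of length $n$, so that each anchored Dyck word $w$ has a unique preimage $\alpha(w)$. Since the germ order already yields a bijection from $[0,C_k[$ onto the set of $k$-germs, and there are exactly $C_k$ anchored Dyck words of length $n$ (one per Dyck word of length $2k$, by prefixing a $0$-bit), it suffices to prove injectivity.

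First I would verify that $f(\alpha)$ is always an anchored Dyck word. The initial $0$ of $F(\alpha)$ contributes a leading $0$-bit, and by the construction in Theorem~\ref{thm1} every $j\in[1,k]$ has its first occurrence in $F(\alpha)$ strictly preceding its second. In every prefix of $f(\alpha)$ each $1$-bit is therefore paired with an earlier $0$-bit, forcing the number of $0$-bits in the prefix to be at least the number of $1$-bits, which is the anchored Dyck condition.

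Next I would factor injectivity of $\alpha\mapsto f(\alpha)$ through injectivity of $\alpha\mapsto F(\alpha)$ and of $F(\alpha)\mapsto f(\alpha)$. For the former, an induction on the depth of $\alpha$ in $\mathcal T_k$ recovers the parent $F(\beta)$ from $F(\alpha)$ by undoing the $i$-nested castling, with $i=i(\alpha)$ read off as the length of the maximal ascending prefix $W^i$ and descending suffix $Z^i$ of $F(\alpha)$ granted by Theorem~\ref{thm1}. For the latter, the Dyck path $\rho(\alpha)$ of Subsection~\ref{r4} is intrinsically determined by $f(\alpha)$, and the canonical labeling of its up-steps (the integers $k,k-1,\ldots,0$ assigned top-to-bottom, left-to-right) reconstructs which integer has its first appearance at each $0$-bit position; the $1$-bit positions, paired with the $0$-bit positions via the nesting of $\rho(\alpha)$, then pin down all second appearances, and hence all of $F(\alpha)$.

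The main obstacle I expect is verifying that the canonical up-step labeling of Subsection~\ref{r4} really does coincide with the first-appearance labeling of $F(\alpha)$. I would handle this by induction along $\mathcal T_k$: the base case $F(0^{k-1})=012\cdots kk\cdots 21$ is a single mountain whose up-step labels already read $k,k-1,\ldots,0$ top-to-bottom and left-to-right. For the inductive step I would track how the $i$-nested castling $W^i|X|Y|Z^i\mapsto W^i|Y|X|Z^i$ reshapes $\rho(\beta)$ into $\rho(\alpha)$, using the structural content of Theorem~\ref{thm1} (the $W^i$ ascending, $Z^i$ descending, and the $kk$ substring of $F(\alpha)$) to confirm that swapping $X$ and $Y$ produces a Dyck path on which first appearances are still enumerated in the canonical order. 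Once this compatibility is in hand, both factors above are injective, the map $\alpha\mapsto f(\alpha)$ is a bijection by the counting argument, and $\alpha(w):=f^{-1}(w)$ is the specific $k$-germ whose existence and uniqueness the theorem claims.
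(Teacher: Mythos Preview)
Your strategy is sound and, at its core, overlaps with the paper's: the key step in both is that the anchored Dyck word $w$ determines the nest $F(\alpha)$ via the canonical top-to-bottom, left-to-right labeling of the up-steps of the Dyck path $\rho$. The paper simply invokes this as the ``Lexical Procedure'' from \cite{D2,D1} and stops, treating the reconstruction $w\mapsto F(\alpha)$ (and implicitly $F(\alpha)\mapsto\alpha$) as already established there. You instead package the argument as a counting bijection: $C_k$ germs, $C_k$ anchored Dyck words, so injectivity of $\alpha\mapsto f(\alpha)$ suffices; and you correctly single out the compatibility of the canonical labeling with the first-appearance labeling of $F(\alpha)$ as the substantive point to verify, proposing an induction along $\mathcal T_k$ rather than a citation. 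That makes your route more self-contained, at the cost of extra work.

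One technical point deserves care. Your proposed recovery of $i=i(\alpha)$ from $F(\alpha)$ as ``the length of the maximal ascending prefix $W^i$'' does not work in general: Theorem~\ref{thm1} only asserts that $W^i$ is ascending, not that it is the \emph{maximal} ascending prefix. For instance, with $k=3$ and $\alpha=01$ one has $i(\alpha)=1$ but $F(\alpha)=02211$, whose maximal strictly ascending prefix has length $2$. This does not sink your plan, since you do not actually need to invert $\alpha\mapsto F(\alpha)$ by undoing castlings: once your ``main obstacle'' is handled, the canonical labeling gives a map $g$ from anchored Dyck words to $n$-strings with $g\circ f=F$, and injectivity of $\alpha\mapsto F(\alpha)$ can then be deduced from the full counting argument (or, more directly, by reading the germ $\alpha$ off from the Dyck path itself via the RGS correspondence in \cite{D2,D1}). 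Just do not rely on the ascending-prefix heuristic.
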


\begin{proof}
The Lexical Procedure \cite[Section 7]{D2}, \cite[Section 7]{D1} restores the positive integer entries of $F(\alpha)$ corresponding to the $k$ non-initial 0-bits of $w=f(\alpha)$. These are the first appearances $j'$ of each integer $j\in[1,k]$ in $F(\alpha)$. By forming the Dyck word $0u1v$ of $f(\alpha)$, the second appearance $j''$ of $j$ is found by replacing its corresponding 1-bit in $f(\alpha)$ by $j=j''$ in $F(\alpha)$.
\end{proof}

\subsection{Dyck nests}\label{r5}

Our calling the strings $F(\alpha)$ by the name of {\it Dyck nests}, or {\it $n$-nests}, was suggested by the sets of nested intervals formed by the projections on the $x$-axis of the two appearances $j'$ and $j''$ of each integer $j\in[1,k]$ as numbers assigned to the respective up- and down-steps of each Dyck path $\rho(\alpha)$.

We take the tree ${\mathcal T}_k$ whose nodes were originally denoted via the $k$-germs $\alpha$, and denote them, further, via the $n$-nests $F(\alpha)$, in representation of the corresponding anchored Dyck words $f(\alpha)$. With this nest notation, $\mathcal T_k$ will be now said to be a {\it tree of Dyck nests}.

\begin{corollary}\label{t5} The set of $n$-nests $F(\alpha)$ is in one-to-one correspondence with the set of anchored Dyck words $f(\alpha)$ of length $n$.
\end{corollary}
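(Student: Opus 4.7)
The plan is to read the statement essentially as a bijective repackaging of Theorem~\ref{t3}, so the proof reduces to verifying that the map $\alpha \mapsto f(\alpha)$, restricted to $k$-germs $\alpha$ with $\alpha<10^k$, is a well-defined bijection onto the set of anchored Dyck words of length $n$. The forward direction is baked into the definition of $f$ (Section~\ref{nat}): from $F(\alpha)$ we replace first appearances of each integer by a $0$-bit and second appearances by a $1$-bit, producing a binary $n$-string of weight $k$ whose initial entry is $0$, and the nesting structure inherited from the assignment of up-steps and down-steps along $\rho(\alpha)$ (Subsection~\ref{r4}) forces every prefix to have at least as many $0$-bits as $1$-bits. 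Surjectivity is precisely the content of Theorem~\ref{t3}: given any anchored Dyck word $w$ of length $n$, the Lexical Procedure produces a $k$-germ $\alpha(w)$ with $f(\alpha(w))=w$.

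For injectivity I would proceed in either of two ways. The first is a direct uniqueness argument: if $f(\alpha_1)=f(\alpha_2)=w$, then the Lexical Procedure applied to $w$ reconstructs the positions of every $j'$ and $j''$ in the associated $n$-nest by matching each non-initial $0$-bit to the unique $1$-bit that closes its Dyck subword $0u1v$; since this reconstruction depends only on $w$, it must return both $F(\alpha_1)$ and $F(\alpha_2)$, forcing $F(\alpha_1)=F(\alpha_2)$ and hence $\alpha_1=\alpha_2$.

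The second, cleaner, route is a counting argument. The set of $k$-germs has cardinality $C_k$ (as recorded after display~(\ref{treetau})), and Theorem~\ref{thm1} associates a distinct $n$-nest $F(\alpha)$ to each such $k$-germ. The set of anchored Dyck words of length $n$ also has cardinality $C_k$, since it is in bijection with the Dyck words of length $2k$ via prefixing a $0$-bit. A surjection between finite sets of equal cardinality is automatically a bijection, so Theorem~\ref{t3} already finishes the argument.

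The main obstacle, if any, is not mathematical but bookkeeping: one must be careful that $f$ really is defined on the whole tree of $n$-nests and not merely on those produced inductively by the $i$-nested castling of Theorem~\ref{thm1}. This is handled by Theorem~\ref{thm1} itself, which guarantees that every $k$-germ $\alpha$ yields an $n$-nest $F(\alpha)$ with the required nesting properties, so the counting route goes through without further checking.
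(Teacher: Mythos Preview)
Your proof is correct. The paper states this corollary without proof, treating it as an immediate consequence of Theorem~\ref{t3} together with the definition of $f$ in Section~\ref{nat}; your argument supplies exactly the details the paper leaves implicit, and either of your two routes (reconstruction via the Lexical Procedure, or the cardinality count) suffices.

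One small remark: the corollary is phrased as a bijection between the set of $n$-nests $F(\alpha)$ and the set of anchored Dyck words, not between $k$-germs and anchored Dyck words. The map $F(\alpha)\mapsto f(\alpha)$ is the direct replacement of first/second appearances by $0$/$1$, and its inverse is precisely the up-step labelling of Subsection~\ref{r4} (equivalently, the Lexical Procedure). So one can argue the bijection entirely at the level of strings without passing through the $k$-germ $\alpha$; your detour through $\alpha$ is harmless but slightly longer than needed. Also, the fact that there are $C_k$ $k$-germs is recorded just \emph{before} display~(\ref{treetau}), not after.
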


\subsection{Signatures}\label{r}

Each $n$-nest $F(\alpha)$ is encoded by its {\it signature} $A(\alpha)=(A_{k-1}(\alpha),\ldots,A_2(\alpha),a_1)alpha)$, defined as
the vector of halfway-distance floors $A_j(\alpha)$ between the first ($j'$) and second ($j''$) appearances of each integer $j$ assigned to the respective up- and down-steps of the path $\rho(\alpha)$, where $k>j>0$. We write For example, if $j'k'k''j''$ (resp., $j'(k-1)'k'k''(k-1)''j''$) is a substring of $F(\alpha_1)$ (resp., $F(\alpha_2)$), then the halfway-distance floor of $j$ is $\lfloor d(j',j'')\rfloor=\lfloor 3/2\rfloor=1$ (resp. $\lfloor d(j',j'')\rfloor=\lfloor 5/2\rfloor=2$), engaged as the $j$-th entry of $A(\alpha_1)$ (resp., $A(\alpha_2)$).

\begin{claim}\label{cl} Using the equivalence of $n$-nests $F(\alpha)$ and signatures $A(\alpha)$ provided by Theorem~\ref{signest}, below, construction of the tree ${\mathcal T}_k$ of Dyck nests $F(\alpha)$ is simplified by updating just one entry of $A(\beta)$ to get $A(\alpha)$, instead of using the procedure in Theorem~\ref{thm1} to get $F(\alpha)$ from $F(\beta)$.
\end{claim}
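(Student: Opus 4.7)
The plan is to combine the castling decomposition of Theorem~\ref{thm1} with the nest--signature equivalence of Theorem~\ref{signest} and to track, for every integer $j\in[1,k-1]$, how its pair of positions $(j',j'')$ in $F(\beta)$ moves when $F(\beta)=W^i|X|Y|Z^i$ is replaced by $F(\alpha)=W^i|Y|X|Z^i$. The shifts induced by the swap are easy to catalogue: entries of $W^i$ and $Z^i$ stay put, those of $X$ slide right by $|Y|$, and those of $Y$ slide left by $|X|$. Hence, whenever $(j',j'')$ lies entirely inside $X$, entirely inside $Y$, or is split with one end in $W^i$ and the other in $Z^i$, the distance $j''-j'$ is preserved and $A_j$ is unchanged. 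The strict ascent of $W^i$ and the strict descent of $Z^i$ already forbid both endpoints in any single outer block.

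The substance is then to rule out, via the nested-interval property of Dyck nests, all remaining ``mixed'' configurations except one. Let $0<w_1<\cdots<w_{i-1}$ be the non-anchor first-appearances in $W^i$. Using the nested-bracket structure of $F(\beta)$ together with the admissibility of the $i$-nested castling (namely that $c+1$ really does appear in the middle, a feature enforced by $\alpha$'s RGS-validity), the matching second-appearances of $w_1,\ldots,w_{i-1}$ close in the reverse order $w_{i-1},\ldots,w_1$ and fill the rightmost $i-1$ slots of $Z^i$. The leftmost slot of $Z^i$ is then occupied by a unique ``extra'' integer $z_0$ whose first appearance must lie strictly inside the middle $X|Y$. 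A short further argument using the defining clause ``$Y$ begins at the first appearance of $c+1$'' pins this first appearance to $X$: if it lay in $Y$, then $z_0$ would either coincide with $c+1$ or be strictly nested inside $c+1$, and in either case its second appearance would have to sit in $Y$ rather than in $Z^i$. The same nested-bracket reasoning also excludes any integer from straddling the $X$--$Y$ boundary, since whatever opens in $X$ must close before $Y$ opens at the first appearance of $c+1$.

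Putting the pieces together, the only coordinate of the signature altered by the swap is $A_{z_0}$: its first appearance slides right by $|Y|$ while its second stays fixed in $Z^i$, so the distance $d$ between them shrinks to $d-|Y|$, and $A_{z_0}(\alpha)=\lfloor(d-|Y|)/2\rfloor$ replaces $A_{z_0}(\beta)=\lfloor d/2\rfloor$; every other coordinate of $A(\alpha)$ agrees with the corresponding coordinate of $A(\beta)$, which is the claim. The main obstacle is the nested-interval bookkeeping of the middle paragraph: proving (i) that each non-anchor $w_j$ of $W^i$ has its second appearance in $Z^i$, (ii) that the extra integer $z_0$ has its first appearance in $X$ rather than in $Y$, and (iii) that no integer spans from $X$ into $Y$. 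These three structural facts, enforced by admissibility of the castling and the bracket-matching inside a Dyck nest, are precisely what collapse the apparent multi-entry transformation implicit in Theorem~\ref{thm1} to the single-entry update asserted here.
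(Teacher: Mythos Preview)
Your approach---tracking how each pair $(j',j'')$ moves under the swap $W^i|X|Y|Z^i\mapsto W^i|Y|X|Z^i$ and arguing that all but one pair keep their span---is exactly the paper's, except that the paper compresses the whole thing into a single sentence (the proof of Theorem~\ref{id}) asserting that only $A_{i(\alpha)}$ changes, with no case analysis at all. Your structural facts (i)--(iii) are correct, and your $z_0$ is $i(\alpha)$ (this is Corollary~\ref{cuc}(A)), so your changed coordinate matches the paper's.

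One point to tighten: the appeals to ``nested-bracket reasoning'' and ``admissibility'' do not by themselves force (i) or (iii). For example, with $k=4$ the nest $F(101)=024433211$ has $c=2$ with $2'\in X$ and $2''\in Y$ when one formally sets $i=1$; this is not a counterexample only because $101$ has no child with $i(\alpha)=1$ (the RGS constraint $b_1\le b_2$ fails). The clean route to (i), and to the identification $z_0=i(\alpha)$, is to note that every castling on the $\mathcal T_k$-path from the root $0^{k-1}$ to $\beta$ has index $\ge i(\beta)\ge i(\alpha)=i$, so the outer blocks of $F(\beta)$ are literally those of the root: $W^i=0,1',\ldots,(i-1)'$ and $Z^i=i'',(i-1)'',\ldots,1''$. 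This gives (i) instantly, pins $z_0=i$, and shows the middle path stays at height $\ge i$; the remaining constraint $b_i\le b_{i+1}$ (i.e.\ the validity of $\alpha$) is then what rules out the straddling in (iii). Your argument for (ii) also omits the case where $z_0'$ sits in $Y$ but to the right of $(c+1)''$; this case does not actually arise for valid $\alpha$, but even if it did, it would only change the sign of the update to $A_{z_0}$, not the one-entry conclusion.
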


\begin{example}\label{f1-2}
Claim~\ref{cl} is exemplified in Figures~\ref{fig1}--\ref{fig2} for $k=2,3,4,5,6$. In these figures, the first column for each such $k$ shows the $k$-germs $\alpha=a_{k-1}\cdots a_1$ in depth-first order of the node set of ${\mathcal T}_k$, in black except for $a_{i(\alpha)}$, which is in red; the second column shows the corresponding $n$-nests $F(\alpha)$ initialized in the top row as $F(0^{k-1})=$
$$012\cdots(k-2)(k-1)kk(k-1)(k-2)\cdots 21=01'2'\cdots(k-2)'(k-1)'k'k''(k-1)''(k-2)''\cdots 2''1''),$$ (with the ``prime'' notation after the equal sign in accordance to Subsection~\ref{r4}) and continued from the second row on as $F(\alpha)=W^i|Y|X|Z^i$, (as in Theorem~\ref{thm1}), where $W^i$ and $Z^i$ are in black, $Y$ is in red and $X$ is in green, and the parent $\beta$ of $\alpha$ in ${\mathcal T}_k$ having $F(\beta)=W^i|X|Y|Z^i$; this second column has the red-green numbers underlined;
the third and fourth columns have their rows as the  {\it signatures} $B(\alpha)=B_{k-1}B_{k-2}\cdots B_2B_1$ of $\beta$ (starting at the second row) and $A(\alpha)=A_{k-1}A_{k-2}\cdots A_2A_1$ of $\alpha$, specified by having $B_j=B_j(\alpha)$ and $A_j=A_j(\alpha)$, for each $j\in[1,k[$, as the numbers of pairs formed by the two appearances of each integer between the two appearances of $j$ in $F(\beta)$ and $F(\alpha)$, respectively; these third and fourth columns are determined by the black-red-green second column at each row; the fifth column, starting at the second row, is formed by four single-digit columns:
\begin{enumerate}
\item[(1)] the value $i=i(\alpha)$ in the current application of Theorem~\ref{thm1}; ($i$ in red if and only $i>1$);
\item[(2)] the corresponding value of $a_i=a_{i(\alpha)}$ in $\alpha=a_{k-1}a_{k-2}\cdots a_2a_1$;
\item[(3)] the corresponding value of $B_i(\alpha)=B_{i(\alpha)}(\alpha)$ in the third column;
\item[(4)] the value of $A_i(\alpha)=A_{i(\alpha)}(\alpha)$ in the fourth column, with $A_i$ in red if and if $A_i>0$;
\end{enumerate}
 the sixth column is the depth-first order $o(\alpha)$ of $\alpha$ in ${\mathcal T}_k$; all rows of the second column, below the first row, have the substring $kk$ (that is, $k'k''$, in terms of the appearances $k'$ and $k''$ of $k$) either in $Y$ (red) or in $X$ (green); after the initial black row $F(\alpha)=F(0^{k-1})$, the substring $kk$ is red in the two subsequent rows and becomes green in the fourth row; this corresponds to the red value $k-\ell=k-2$ of the seventh column.
For all columns but for the second one in Figures~\ref{fig1}-\ref{fig2}, each row which in the first column has $k$-germ $\alpha=a_{k-1}\cdots a_1$ with $a_1$
a local maximum (so that the following $k$-germ, say $\gamma=c_{k-1}\cdots c_1$, in the same first column, if any, has $c_1=0$) appears underlined.
\end{example}

\subsection{Role of substrings kk in Dyck nests}\label{obs13}

 Each value in the seventh column of Figures~\ref{fig1}-\ref{fig2} equals the corresponding value of item (4) in the fifth column, expressed in terms of the number $c$ of Theorem~\ref{thm1}, item 2, as:
 \begin{enumerate}
 \item[\bf(a)] $\ell$, if $kk$ is red, where $\ell$ is the number of green pairs $(j',j'')$ with $j>c$;
 \item[\bf(b)] $k-\ell$, if $kk$ is green, where $\ell$ is the sum of $c+1$ and the number $d$ of red pairs $(j',j'')$ with $j>c+1$.
 \end{enumerate}
 For example, all cases with $d>0$ (item (b)) in Figure~\ref{fig2} happen precisely for
 $$_{(\alpha,c,d)\;=\;(01111,2,1),\;(11110,3,1),\;(11122,3,1),\;(11221,2,1),\;(12111,2,1),\;(12211,2,2),\;(12221,2,1).}$$

Let $g$ be the correspondence that assigns the values $A_{i(\alpha)}(\alpha)$, (in the seventh column of Figures~\ref{fig1}-\ref{fig2}), to the orders $o(\alpha)$, (in the sixth column), where $\alpha$ refers to $k$-germs.

\begin{theorem}\label{id} For each $k$-germ $\alpha\ne 0^{k-1}$, the signatures $B(\alpha)$ and $A(\alpha)$ of the parent $\beta$ (of $\alpha$ in $\mathcal T_k$), and $\alpha$, respectively, differ solely at the $i(\alpha)$-th entry, that is: $$B_i(\alpha)=B_{i(\alpha)}(\alpha)\ne A_{i(\alpha)}(\alpha)=A_i(\alpha),\mbox{  while  } B_j(\alpha)=A_j(\alpha), \;\forall j\ne i=i(\alpha).$$
 \end{theorem}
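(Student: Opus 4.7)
The plan is to track how the $i$-nested castling $F(\beta)=W^i|X|Y|Z^i\to F(\alpha)=W^i|Y|X|Z^i$ (as provided by Theorem~\ref{thm1}) affects each signature entry, where $i=i(\alpha)$. Associated with the castling is the position permutation $\sigma$ which fixes the $i$ positions of $W^i$ and the $i$ positions of $Z^i$, shifts each position inside $X$ by $+|Y|$, and each position inside $Y$ by $-|X|$. Since $A_j=\lfloor d(j',j'')/2\rfloor$ where $d=j''-j'$ is the distance between the two appearances of $j$ (and in a Dyck nest this distance equals twice the number of pairs nested strictly between them, plus $1$), it suffices to identify those integers $j$ for which $d_j$ changes under $\sigma$.

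I would then classify each $j\in[1,k-1]$ by the pair of parts among $\{W^i,X,Y,Z^i\}$ containing its first and second appearances. Since $W^i$ and $Z^i$ consist of distinct entries (strictly ascending and descending, respectively, by Theorem~\ref{thm1}), neither can hold both appearances of a single integer. Moreover, by the validity of $F(\alpha)$ as a Dyck nest asserted in Theorem~\ref{thm1}, no $j$ can have first appearance in $X$ and second in $Y$, since otherwise $\sigma$ would reverse the order of these appearances. A direct position-shift calculation then shows that $d_j$ is preserved when both appearances lie in a single part and in the ``first in $W^i$, second in $Z^i$'' configuration, while it shifts by $+|Y|$, $-|X|$, $-|Y|$ or $+|X|$, respectively, in the four boundary-crossing configurations (first in $W^i$ with second in $X$ or $Y$, or first in $X$ or $Y$ with second in $Z^i$).

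The key step is to show that exactly one integer $j$ falls into a boundary-crossing configuration and that it carries the label $i$. I would first prove that the portion of the Dyck path corresponding to the middle $X|Y$ never descends below the height $i$ attained at the end of $W^i$; this forbids any down-step in the middle from matching an up-step of $W^i$, ruling out the two ``first in $W^i$'' boundary types. A counting argument using $|W^i|-1=i-1$ first appearances in $W^i$ (excluding the anchor) and $|Z^i|=i$ second appearances in $Z^i$ then leaves exactly one integer whose first appearance lies in the middle and whose second appearance lies in $Z^i$. To identify its label as $i$, I would show that the leftmost entry of $Z^i$ is always labeled $i$: it corresponds to the down-step initiating the descent of $\rho(\beta)$ from height $i+1$ to $i$, matching the unique unmatched up-step at layer $i+1$ at the end of the middle, and under the top-down labeling convention of Subsection~\ref{r4} that up-step carries label $i$.

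The main obstacle will be the joint inductive verification along $\mathcal T_k$ of these two structural invariants (``the middle stays at height $\ge i$'' and ``the leftmost of $Z^i$ is labeled $i$''): both hold for the root $F(0^{k-1})$ by direct inspection and must be shown to be preserved by every castling, which after all only rearranges the middle while leaving $W^i$ and $Z^i$ intact. Once these invariants are in hand the conclusion is immediate: for $j=i$ the distance $d_i$ shifts by $-|Y|$, which is nonzero because the choice $c+1\le k$ in the construction of Theorem~\ref{thm1} forces $|Y|>0$, hence $A_i(\alpha)\ne A_i(\beta)$; whereas for every other $j$ the distance is preserved and $A_j(\alpha)=A_j(\beta)$.
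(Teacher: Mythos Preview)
Your plan follows the same line as the paper's proof—track how the $i$-nested castling moves the two appearances of each $j$ and hence each halfway-distance $A_j$—but you supply the case analysis and structural invariants that the paper's one-sentence argument merely asserts. One small caveat: your claim that $d_i$ shifts specifically by $-|Y|$ presupposes $i'\in X$; if instead $i'\in Y$ the shift is $+|X|$, but in either case the shift is even and nonzero (both $|X|,|Y|\ge 1$ by Theorem~\ref{thm1}, and parity of $d$ forces the relevant one to be even), so the conclusion $A_i(\alpha)\ne A_i(\beta)$ stands.
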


 \begin{proof}
 There is a sole difference between the parent $\beta=b_{k-1}\cdots b_1$ of $\alpha=a_{k-1}\cdots a_1$ and $\alpha$ itself, occurring at the $i(\alpha)$-th position, whose entry is increased in one unit from $\beta$ to $\alpha$, that is: $a_{i(\alpha)}=b_{i(\alpha)}+1$. The effect of this on $F(\alpha)$, namely the $i$-nested castling of the inner strings $Y$ and $Z$ of $F(\beta)=X^i|Y|Z|W^i$ into $F(\alpha)=X^i|Z|Y|W^i$, modifies just one of the halfway-distance floors $A_j=\lfloor d(j',j'')/2\rfloor$ between the first appearance $j'$  of the corresponding $j\in[0,k[$ in $F(\alpha)$ and its second appearance, $j''$, namely $A_i=\lfloor d(i',i'')/2\rfloor$, where $i=i(\alpha)$.
 \end{proof}

\begin{theorem}\label{signest}
The correspondence that assigns each $n$-nest to its signature is a bijection.
\end{theorem}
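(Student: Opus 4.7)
The plan is to reduce Theorem~\ref{signest} to proving injectivity of the signature map, then to reconstruct $F(\alpha)$ explicitly from $A(\alpha)$. Corollary~\ref{t5} tells us the set of $n$-nests is finite of cardinality $C_k$, so specifying the codomain as the image of the signature map reduces the problem to showing $F(\alpha_1)\ne F(\alpha_2)\Rightarrow A(\alpha_1)\ne A(\alpha_2)$.

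First I would reinterpret $A_j(\alpha)$ combinatorially. Because $F(\alpha)$ is a Dyck nest, the substring strictly between the entries $j'$ and $j''$ of $F(\alpha)$ is itself a complete concatenation of matched pairs of repeated integers, so the distance $d(j',j'')$ is odd, equal to $2A_j(\alpha)+1$, and $A_j(\alpha)$ is precisely the number of integer-pairs $(m',m'')$ nested strictly inside $(j',j'')$. In addition, Theorem~\ref{thm1} guarantees that $kk$ is always a substring of $F(\alpha)$, so I may implicitly extend the signature by $A_k(\alpha):=0$.

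Next, I would reconstruct $F(\alpha)$ from the extended signature $(A_k,A_{k-1},\ldots,A_1)$ by induction on $k$. The base cases $k=1,2$ are immediate from the short tables computed when verifying Example~\ref{f1-2}. For the inductive step, I would use that the substring $kk$ sits at the leftmost peak of the topmost layer of the Dyck path $\rho(\alpha)$, as dictated by the layered left-to-right labeling of Subsection~\ref{r4}; hence its location inside $F(\alpha)$ is forced by the indices $j$ with $A_j=0$ together with the nesting relations recorded by the remaining $A_j$'s. Excising this $kk$ yields a Dyck nest $F'$ of length $n-2=2(k-1)+1$ over the alphabet $[0,k-1]$, whose signature is obtained from $A(\alpha)$ by dropping the coordinate $A_k$ and subtracting $1$ from every $A_j$ for which the pair $(j',j'')$ contained $(k',k'')$ (information already encoded in $A$). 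The inductive hypothesis recovers $F'$ uniquely, and reinserting $kk$ at its predetermined location reconstructs $F(\alpha)$.

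The main obstacle I expect is the bookkeeping in the excision/reinsertion step: pinpointing which coordinates $A_j$ must be decremented when $kk$ is removed, and verifying that the result is a well-formed $(k-1)$-signature to which the inductive hypothesis applies. An alternative route, perhaps conceptually cleaner, is to identify each $n$-nest with an ordered plane forest whose nodes are the labeled pairs $(j',j'')$, and to show that the layered labeling order is a canonical traversal of this forest in which the sequence of subtree sizes (the $A_j$'s) uniquely determines the forest; once that is established, reading off $F(\alpha)$ from the reconstructed forest closes the argument.
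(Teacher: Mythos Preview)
Your reduction to injectivity and your reinterpretation of $A_j(\alpha)$ as the number of pairs nested inside $(j',j'')$ are both correct, but your inductive step has a genuine gap, and it is more than bookkeeping. To excise $kk$ and pass to a $(k{-}1)$-nest you must first determine, from $A(\alpha)$ alone, the set $S=\{j:(k',k'')\subset(j',j'')\}$ of ancestors of $k$ in the pair-forest (these are the coordinates to decrement, and they fix the position of $kk$). You assert this is ``forced'' by the signature but give no mechanism. For a concrete instance, take $k=5$ and $A=(A_4,A_3,A_2,A_1)=(0,1,1,2)$: then $4$ and $5$ are leaves and each of $2$ and $3$ has exactly one leaf descendant; deciding from $A$ that $5$ sits under $3$ (so $S=\{1,3\}$) rather than under $2$ is precisely the reconstruction you are trying to establish, so invoking it here is circular. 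Your plane-forest alternative has the same defect: the statement that subtree sizes indexed by this particular canonical labeling determine the forest \emph{is} Theorem~\ref{signest} restated, not a lemma toward it.

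The paper sidesteps this by working from the \emph{smallest} positive label outward rather than from $k$ inward. The rightmost entry of every $n$-nest is $1''$: the anchored path ends with a down-step from height $2$ to height $1$, matched by the rightmost up-step in layer $[1,2]$, which the rule of Subsection~\ref{r4} assigns label $1$. With $1''$ pinned at position $2k$, the value $A_1$ alone locates $1'$; one then places $2''$ at the next free slot to the left of $1'$, uses $A_2$ to locate $2'$, and continues right-to-left, restarting at the rightmost remaining zero whenever the initial $0$ is reached. This yields an explicit inverse with no induction on $k$. If you prefer your inductive framing, peel off the pair labeled $1$ rather than $kk$: its position is known in advance, which is exactly the anchor your excision of $kk$ lacks.
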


\begin{proof} Let $\alpha=a_{k-1}\ldots a_2a_1$ be a $k$-germ. The $n$-nest $F(\alpha)=c_0c_1\ldots c_{2k}$ has rightmost entry $c_{2k}=1''$, so $A_1(\alpha)$ determines the position of $1'$. For example, if $A_1(\alpha)=0$, then
$c_{2k-1}=1'$, so $a_1$ is a local maximum (indicated in Figures~\ref{fig1}--\ref{fig2} by having $\alpha$, $B(\alpha)$, $A(\alpha)$, $\cdots$, $o(\alpha)$, $A_{i(\alpha)}(\alpha)$ underlined). To obtain $F(\alpha)$ from $A(\alpha)$, we initialize $F(\alpha)$ as the $n$-string $F^0=00\cdots 0$.
Setting the positions of $1'',1',2'',2',\ldots,(k-1)'',(k-1)'$ successively in place of the zeros of $F^0$ in their places from right to left according to the indications $A_1(\alpha)$, $A_2(\alpha)$, $\ldots$, $A_{k-1}(\alpha)$, is done in stages: first setting the pairs $(i',i'')$ as outermost pairs from right to left; when reaching the initial 0, we restart if necessary on the right again with the replacement of the remaining zeros by the remaining pairs $(i',i'')$ in ascending order from right to  left.
Thus, given $A(\alpha)$, we recover $F(\alpha)$. %, showing that the correspondence in the statement is a bijection.
\end{proof}

\begin{example}
With $k=6$, $A(11111)=01122$, (resp.,  $A(12122)=00201$), we go from
$F^0$ to
$$\begin{array}{|l|l|l|}
0200002100001\mbox{ to}&\mbox{ (resp., }&0300003221001\mbox{ to}\\
0240042130031\mbox{ to}&&0366553221441\\
0236642135531&&\\
\end{array}\;),$$
the last row yielding four (resp., two) entries separating the two appearances $1'$ and $1''$ of $1\in[0,k]$, namely $3',5',5''$ and $3''$, (resp., $4'$ and $4''$).
\end{example}

Theorem~\ref{signest} provides a fashion of counting Catalan numbers
via RGS's \cite{D2,D1} different from that of \cite[item (u), p. \ 224]{Stanley}.
Both fashions, which are compared in \cite{D2}, accompany the counting list of RGS's in reversed order. In both cases (namely Theorem~\ref{signest} and item (u)),
the null root RGS, 0, corresponds to the signatures $12\cdots k$, for all $0<k\in\mathbb{Z}$; and the last RGS for every such $k$ corresponds to the signatures $0^k$. Thus, these initial (resp., terminal) terms coincide. However, these two counting lists with same initial (resp., terminal) terms differ in general.

\begin{theorem}\label{ell} \begin{enumerate}\item[(1)] The correspondence $g$, whose definition precedes Theorem~\ref{id}, is extended uniquely for each $k>1$ and $k$-germ $\alpha$, so that in terms of $\alpha$ seen as an RGS, the value of $g(o(\alpha))=A_{i(\alpha)}(\alpha)$ is expressible either as $\ell$ or as $k-\ell$, as in Subsection~\ref{obs13}.
\item[(2)] Registration of the value $\ell$ (resp., $-\ell$) at each stage in ${\mathcal S}\setminus\beta(0)$
for which $g(o(\alpha))$ is expressible as $\ell$ (resp., $k-\ell$) as in item (1), is performed independently of $k$, so it constitutes a universal single update of Dyck-nest signatures, just controlled by the RGS tree. This yields an integer sequence accompanying the natural order of RGS's in $\mathcal S$.    \end{enumerate}
\end{theorem}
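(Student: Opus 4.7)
The strategy is to prove items (1) and (2) separately, in both cases exploiting the castling decomposition $F(\alpha) = W^i|Y|X|Z^i$ of Theorem~\ref{thm1} together with Theorem~\ref{id}, which already isolates the sole change between $B(\alpha)$ and $A(\alpha)$ to the $i(\alpha)$-th entry.

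For item (1), set $i = i(\alpha)$. Because $W^i$ is the ascending $i$-substring $0\,1\cdots(i-1)$ and $Z^i$ is the descending substring $i\,(i-1)\cdots 1$, the second occurrence $i''$ is fixed at position $n-i$ in $F(\alpha)$, while $i'$ lies inside $Y|X$. The identity $A_i(\alpha) = \lfloor (n-i - \mathrm{pos}(i'))/2\rfloor$ then expresses $A_i(\alpha)$ as the number of value-labels $j \ne i$ whose two occurrences both fall strictly between $\mathrm{pos}(i')$ and $n-i$ in $F(\alpha)$. I split by whether $kk$ lies in $Y$ (case (a)) or in $X$ (case (b)). In case (a) the pairs caught in this window are precisely the green pairs of $X$ with label $> c$, giving $A_{i(\alpha)}(\alpha) = \ell$. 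In case (b) the pair $kk$ sits in $X$ alongside $i'$; one counts instead the pairs that fall \emph{outside} the window, obtaining $k - A_{i(\alpha)}(\alpha) = c + 1 + d$, hence $A_{i(\alpha)}(\alpha) = k - \ell$. Throughout, $c$ and the green/red bookkeeping come straight from item~2 of Theorem~\ref{thm1} and the colour convention of Example~\ref{f1-2}.

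For item (2), the key is a structural lemma: if the RGS $\alpha$ is viewed as a $k$-germ and also as the $(k+1)$-germ $0\alpha$, then $F_{k+1}(0\alpha)$ arises from $F_k(\alpha)$ by replacing its unique substring $kk$ by $k\,(k+1)(k+1)\,k$. This I plan to prove by induction on the depth of $\alpha$ in $\mathcal T$: the base case $\alpha = 0^{k-1}$ is the explicit formula from Theorem~\ref{thm1}, and the inductive step uses that the $i$-nested castling is local to the inner block $X|Y$ and commutes with an insertion that sits strictly inside whichever of $X$ or $Y$ currently contains $kk$. Granted this lemma, the leftmost entry $c$ of the inner part of $F(\beta)$ is $k$-independent; in case (a) the inserted new pair is red of label $> c$, but $\ell$ counts only green pairs, so $\ell$ is unchanged; in case (b) the new pair is green, but $\ell = c + 1 + d$ counts only red pairs of label $> c+1$, so again $\ell$ is unchanged. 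Therefore the signed integer $\ell$ in case (a) or $-\ell$ in case (b) depends on the RGS $\alpha$ alone, and depth-first traversal of $\mathcal T$ aligns these integers with $\mathcal S \setminus \{\beta(0)\}$ to produce the universal sequence promised by the theorem.

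The main obstacle I expect lies in the counting for case (b) of item (1): $i'$ ends up in the same block $X$ as $kk$, and one must carefully track which small-label pairs are pushed to the left of $i'$ by the castling in order to arrive at the clean identity $A_{i(\alpha)}(\alpha) = k - c - 1 - d$. Once this bookkeeping is secured, case (a) is a simpler specialisation and the structural lemma powering item (2) is mechanical.
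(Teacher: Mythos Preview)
Your proposal is correct and follows essentially the same route as the paper: both rest on the dichotomy of Subsection~\ref{obs13} (whether the substring $kk$ lies in $Y$ or in $X$) together with the structural fact that passing from $k$ to $k+1$ inserts the pair $(k+1)'(k+1)''$ between $k'$ and $k''$. The paper's proof merely asserts these two points in a couple of sentences, whereas you supply the supporting detail---the window-counting that recovers formulas (a) and (b) of Subsection~\ref{obs13} for item~(1), and the inductive proof of the insertion lemma for item~(2).
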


The updates mentioned in Theorem~\ref{ell}, item (2), will be expressed in terms of the function in display (\ref{(1)}), to be employed in Theorems~\ref{1} and~\ref{j}, respectively.

\begin{proof}
The options in item (1) depend on whether the substring $k'k''$ lies in $Y$ (red) or in $X$ (green). In the first case, $g(o(\alpha))$ is of the form $\ell$. Otherwise, it is of the form $k-\ell$, for if $k$ is increased to $k+1$, then the substring $(k+1)'(k+1)''$ separates $k'$ and $k''$, thus adding one unit to $g(o(\alpha))$, so that $k-\ell$ becomes $(k+1)-\ell$. This happens independently of the values of $k$, yielding item (2).
\end{proof}

\begin{example}
The nonzero values $g(k)$ are initially as follows:
$g(3)=k-2$,
$g(7)=k-3$,
$k(8)=1$,
$g(11)=k-2$, $g(12)=k-3$, $g(17)=k-2$, $g(19)=k-4$, $g(21)=1$, $g(22)=k-3$, $g(25)=k-2$, $g(26)=1$, $g(30)=k-3$, $g(31)=1$, $g(33)=k-4$, $g(34)=2$, $g(35)=1$, $g(38)=k-2$, $g(39)=k-3$, $g(40)=k-4$, etc.
\end{example}

\begin{corollary}\label{cuc}
The following items hold:

\begin{enumerate}\item[{\bf(A)}]
The leftmost entry in the substring $W^i$ of $F(\alpha)=X^i|Z|Y|W^i$ is $i''$.

\item[{\bf(B)}] If the substring $k'k''$ of $F(\alpha)$ appears to the left of $i'$ in $F(\alpha)$, then $g(o(\alpha))$ equals the number of pairs $(j',j'')$ in the interval $]i',i''[$, for all pertaining integers $j\in[1,k[$. In particular, $F(\alpha)$ ends at the substring $1'1''$ if and only if $g(o(\alpha))=0$.

\item[{\bf(C)}] If $k'k''$ lies in $]i',i''[$ then $k'k''$ is contained in $X$ (green substring in $F(\alpha)$, Figures~\ref{fig1}--\ref{fig2}) and $g(o(\alpha))=k-j$, where $j=j(\alpha)$ is determined as follows: since $i(\beta)=1+i(\alpha)$, where $\beta=\beta(\alpha)$ is the parent of $\alpha$, then $j$ is the sum of $g(o(\beta))$ (which is as in item (B)) plus the leftmost red number of $F(\alpha)$.
\end{enumerate}
\end{corollary}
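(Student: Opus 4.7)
The plan is to prove each of items (A), (B), (C) by leveraging the castling structure from Theorem~\ref{thm1}, the bijection between Dyck nests and signatures in Theorem~\ref{signest}, and the single-entry update from Theorem~\ref{id}, in that order.

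For item (A), I would induct on the depth-first order of $\alpha$ in $\mathcal T_k$, proving the stronger statement that the rightmost $i(\alpha)$ characters of $F(\alpha)$ are exactly the descending block $i(\alpha)''(i(\alpha)-1)''\cdots 2''1''$. Since the castling from $F(\beta)$ to $F(\alpha)$ preserves the rightmost-length-$i(\alpha)$ substring, one need only read off this tail inside $F(\beta)$. Writing $\alpha=a_{k-1}\cdots a_1$, if $a_{i(\alpha)}>1$ then $i(\beta)=i(\alpha)$ and the inductive hypothesis at $\beta$ yields the block directly; if $a_{i(\alpha)}=1$ then either $\beta=0^{k-1}$, whose explicit tail $k''(k-1)''\cdots 1''$ contains what is needed, or $i(\beta)>i(\alpha)$ and the inductive hypothesis at $\beta$ produces a longer descending block of which the desired block is a suffix. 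The leftmost entry of $W^{i(\alpha)}$ is then $i(\alpha)''$.

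For item (B), recall from Subsection~\ref{r} that $A_j(\alpha)=\lfloor d(j',j'')/2\rfloor$ coincides with the number of pairs $(j',j'')$ trapped strictly between them in $F(\alpha)$, by the Dyck nesting. Since $g(o(\alpha))=A_{i(\alpha)}(\alpha)$, the asserted equality reduces to this count as soon as $k'k''$ is excluded from contention as a trapped pair, which is exactly what the hypothesis guarantees. For the ``in particular'' clause, combine this with (A): $g(o(\alpha))=0$ forces $i(\alpha)'$ immediately adjacent to $i(\alpha)''$, and the descending tail from (A) then determines the suffix of $F(\alpha)$ as $i(\alpha)'i(\alpha)''(i(\alpha)-1)''\cdots 1''$; this suffix coincides with $1'1''$ precisely when $i(\alpha)=1$, consistent with the asserted equivalence.

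Item (C) is the main obstacle. When $k'k''$ sits in $]i',i''[$, it lies in the middle block of $F(\alpha)=X^i|Z|Y|W^i$; combining Theorem~\ref{thm1} with item (A) applied at $\beta$, $k'k''$ cannot already belong to the block that becomes the red substring $Z$ in $F(\alpha)$ without contradicting the descending-tail structure of $F(\beta)$, so $k'k''$ is forced into the green substring. The relation $i(\beta)=1+i(\alpha)$ then serves as the hinge: item (B) applied at $\beta$ delivers $g(o(\beta))$ as the count of pairs trapped inside $]i(\beta)',i(\beta)''[$ in $F(\beta)$, and the $i(\alpha)$-castling redistributes these pairs together with the $k'k''$ block across the new window $]i(\alpha)',i(\alpha)''[$ in $F(\alpha)$. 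Unwinding this redistribution yields $g(o(\alpha))=k-j$ with $j=g(o(\beta))+(\text{leftmost red number of }F(\alpha))$. The delicate combinatorial step --- identifying this ``leftmost red number'' with the quantity $c+1$ of Theorem~\ref{thm1}(2) together with the count $d$ of red pairs from Subsection~\ref{obs13}(b) --- is where the main effort resides.
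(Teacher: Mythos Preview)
Your approach is essentially aligned with the paper's, though far more explicit. The paper's own proof is a two-sentence pointer: it says the statement follows from Subsection~\ref{obs13} together with Theorems~\ref{signest} and~\ref{ell}, and that items (B) and (C) are simply restatements of items (a) and (b) of Subsection~\ref{obs13}. Your plan unpacks exactly these references---the signature definition from Subsection~\ref{r} for (B), and the $c$, $d$, $\ell$ bookkeeping of Subsection~\ref{obs13}(b) for (C)---so the logical route is the same.

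Two minor remarks. First, your induction for (A) is largely redundant: Theorem~\ref{thm1} already asserts that the rightmost length-$i$ block is a descending $i$-substring, and since castling fixes this block while the root case $F(0^{k-1})$ has it equal to $i''(i-1)''\cdots 1''$, the leftmost entry $i''$ drops out immediately without a separate induction. Second, your handling of the ``in particular'' clause in (B) is the right reading---the equivalence is meant in the case $i(\alpha)=1$, since for $i(\alpha)>1$ item (A) forces the suffix $2''1''$ regardless of the value of $g(o(\alpha))$; you might state this restriction outright rather than leaving it as ``consistent with.'' Your closing observation that the crux of (C) is matching the ``leftmost red number'' to $c+1$ plus the count $d$ from Subsection~\ref{obs13}(b) is exactly where the paper sends the reader, so nothing is missing there.
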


\begin{proof} The statement follows from Subsection~\ref{obs13} and Theorems~\ref{signest} and~\ref{ell}. In particular, items (B) and (C) are equivalent to items 1 and 2 of Subsection~\ref{obs13}, respectively.
\end{proof}

\begin{example}
Let $k=5$. Then, $g(21)=g(o(1110))=1$, as $]i',i''[=]2',2''[$ contains just the pair $(4',4'')$, accounting for one pair by Corollary~\ref{cuc}(B). For $\alpha=1111$, $k'k''$ is green and $g(22)=g(o(1111))=g(o(\alpha))=k-j=k-3$, where $j=3$ is the sum of $g(o(\beta))=g(o(1110))=g(21)=1$ and the leftmost red number of $F(\alpha)$, namely 2.
In addition, $g(28)=g(o(1200))=0$ has child $\alpha=1210$ with $g(o(\alpha))=g(30)=k-3$, because the leftmost red entry of $F(\alpha)$ is 3.
The child $\alpha'=1220$ of $\alpha$ has $g(o(\alpha'))=g(33)=k-(3+1)=k-4$.
However, the child $\alpha''=1230$ of $\alpha'$ has $g(o(\alpha''))=g(37)=0$.
Now, the child $1211$ of $\alpha$ has $g(o(1211))=1$, because $1'$ is the leftmost number of $W^1$ and there is only one pair of appearances of a member of $[1,k-1]=[1,4]$, namely $3'3''$,  between $1'$ and $1''$.
\end{example}

\section{Universal single updates}

Now, we introduce strings $A_i^j$, for all pairs $(i,j)\in\mathbb{Z}^2$ with $1<i\le j$. The entries of each $A_i^j$ are integer pairs $(\iota,\zeta)$, denoted $\iota_\zeta$, starting with $1_1$, initial case of the more general notation $1_j$, for $j\ge 1$. The strings $A_i^j$  are conceived as shown in Table~\ref{tab1}. The components $\iota$ in the entries $\iota_\zeta$ represent the indices $i=i(\alpha)$ of Theorem~\ref{thm1} in their order of appearance in $\mathcal S$, and $\zeta$ is an indicator to distinguish different entries $\iota_\zeta$ while $\iota$ is locally constant.

\begin{table}[htp]
$$\begin{array}{|l|}\hline
^{A_2^2=2_1|1_1|1_2;}
_{A_2^3=2_2|1_1|1_2|1_3;}\\
^{A_2^4=2_3|1_1|1_2|1_3|1_4;}
_{A_2^5=2_4|1_1|1_2|1_3|1_4|1_5;}\\
^{\cdots}
_{A_3^3=3_1|1_1|A_2^2|A_2^3=3_1|1_1|2_11_11_2|2_21_11_21_3;}\\
^{A_3^4=3_2|1_1|A_2^2|A_2^3|A_2^4=3_2|1_1|2_11_11_2|2_21_11_21_3|2_31_11_21_31_4;}
_{A_3^5=3_3|1_1|A_2^2|A_2^3|A_2^4|A_2^5=3_2|1_1|2_11_11_2|2_21_11_21_3|2_31_11_21_31_4|2_41_11_21_31_41_5;}\\
^{\cdots}
_{A_4^4=4_1|1_1|A_2^2|A_3^3|A_3^4=4_1|1_1|2_11_11_2|3_11_12_11_11_22_21_11_21_3|3_21_12_11_11_2|2_21_11_21_3|2_31_11_21_31_4;}\\
^{A_4^5=4_2|1_1|A_2^2|A_3^3|A_3^4|A_3^5;}
_{\cdots}\\
^{A_5^5=5_1|1_1|A_2^2|A_3^3|A_4^4|A_4^5;}
_{A_5^6=5_2|1_1|A_2^2|A_3^3|A_4^4|A_4^5|A_4^6;}\\
^{\cdots}
_{A_i^{i+j}=i_{i+j}|1_1|A_2^2|\cdots|A_{i-1}^{i-1}|A_{i-1}^i|\cdots|A_{i-1}^{i+j}, \; \forall 0<i\in\mathbb{Z}, \forall 0<j\in\mathbb{Z}.}\\\hline
\end{array}$$
\caption{Introduction of strings $A_i^j$, for all pairs $(i,j)\in\mathbb{Z}^2$ such that $1<i\le j$.}
\label{tab1}
\end{table}

Recalling items (B) and (C) of Corollary~\ref{cuc}, we define the updating integers $h(\alpha)$ by:
\begin{align}\label{(1)}h(\alpha)=\begin{cases}
g(o(\alpha)),&\mbox{ if }g(o(\alpha))\mbox{ is as in (B)}; \\
g(o(\alpha))-k,&\mbox{ if  }g(o(\alpha))\mbox{ is as in (C).}\\
\end{cases}
\end{align}

Next, consider the infinite string $A$ of integer pairs $i_\zeta$ formed as the concatenation
\begin{align}\label{(2)} A=A_1^1|A_2^2|\cdots|A_j^j|\cdots=*1_1|A_2^2|\cdots|A_j^j|\cdots,\end{align}
with $A_1^1=*|1_1=*1_1$ standing for the first two lines in tables as in Figures~\ref{fig1}--\ref{fig2}, where $*$, standing for the root of $\mathcal T_k$, represents the first such line, and $A_1^1$ represents the second one.

\begin{example} Illustrating (\ref{(2)}), Table~\ref{tab2} has its double-line heading formed by the subsequent terms of a suffix of $A$. The third heading line is formed first by the root $*$ of all trees $\mathcal T_k$ and then by the successive parameters $i=i(\alpha)>1$ initiating the substrings in the second line.
The fourth line contains the values $h(\alpha)$ for the parameters $i(\alpha)>1$ of the third line.
In every column, the values below that line are the values $h(\alpha)$ for RGS's $\alpha$ of the successive $k$-germs $\alpha$ with $i=i(\alpha)=1$.
Thus, below the third heading line, the values of each column represent the updates $h(\alpha)$ corresponding to all the maximal paths of trees $\mathcal T_k$ that, after its first node $\alpha$, has all other nodes $\alpha$ with $i=i(\alpha)=1$. Note that $A_1^1$ is represented as $[^{\,*}_{1_1}]$.
In the same way, we use notations $[^{\,3_1}_{1_1}]$ and $[^{\,4_1}_{1_1}]$, that could be generalized to $[^{\,j_1}_{1_1}]$.
\end{example}

Each prefix of $A$ corresponds to all $k$-germs representing a specific RGS $\alpha$ for increasing values of $k>1$, and is assigned the value $h(\alpha)$ to be its updating integer, in accordance to Corollary~\ref{cuc} but for the initial position, that is assigned an asterisk * to represent all the roots of the trees ${\mathcal T}_k$, for all $k>1$. More specifically, all prefixes of $A$ with Catalan-number lengths $C_k$ are the strings formed by locations $i=i(\alpha)$ in the natural order of the corresponding trees ${\mathcal T}_k$, while the values $h(\alpha)$ of the participating RGS's $\alpha$ occupy the subsequent positions down below the heading lines.

\begin{table}[htp]
$$\begin{array}{|r|r|rrr|rrrrrrrrr|}\hline
A_1^1&A_2^2&A_3^3&&&A_4^4&&&&&&&&\\\hline
[^{\;*}_{1_1}]&A_2^2&[^{\,3_1}_{1_1}]&A_2^2&A_2^3&[^{\,4_1}_{1_1}]&A_2^2&A_3^3&&&A_3^4&&&\\\hline
*&2&3&2&2&4&2&3&2&2&3&2&2&2\\\hline\hline
 *&\;{\it 0}&\;\;{\it 0}&{\it \mbox{-}3}&\;\;0&\;\;{\it 0}&\;\;0&{\it -4}&\;\;{\it 1}&\;\;0&\;\;0&{\it -3}&{\it -4}&\;\;0\\
 {\it 0}&{\it -2}&0&{\it 1}&-2&0&-2&0&{\it -3}&-2&0&1&{\it 2}&-2\\
 &0&&0&{\it-3}&&0&&0&{\it 1}&&0&{\it 1}&-3\\
 &&&&0&&&&&0&&&0&{\it -4}\\
 &&&& &&&&& &&&&0\\\hline
\end{array}$$
\caption{Exemplification of $A=A_1^1|A_2^2|\cdots|A_j^j|\cdots=*1_1|A_2^2|\cdots|A_j^j|\cdots$}
\label{tab2}
\end{table}

\begin{figure}[htp]
\includegraphics[scale=.782]{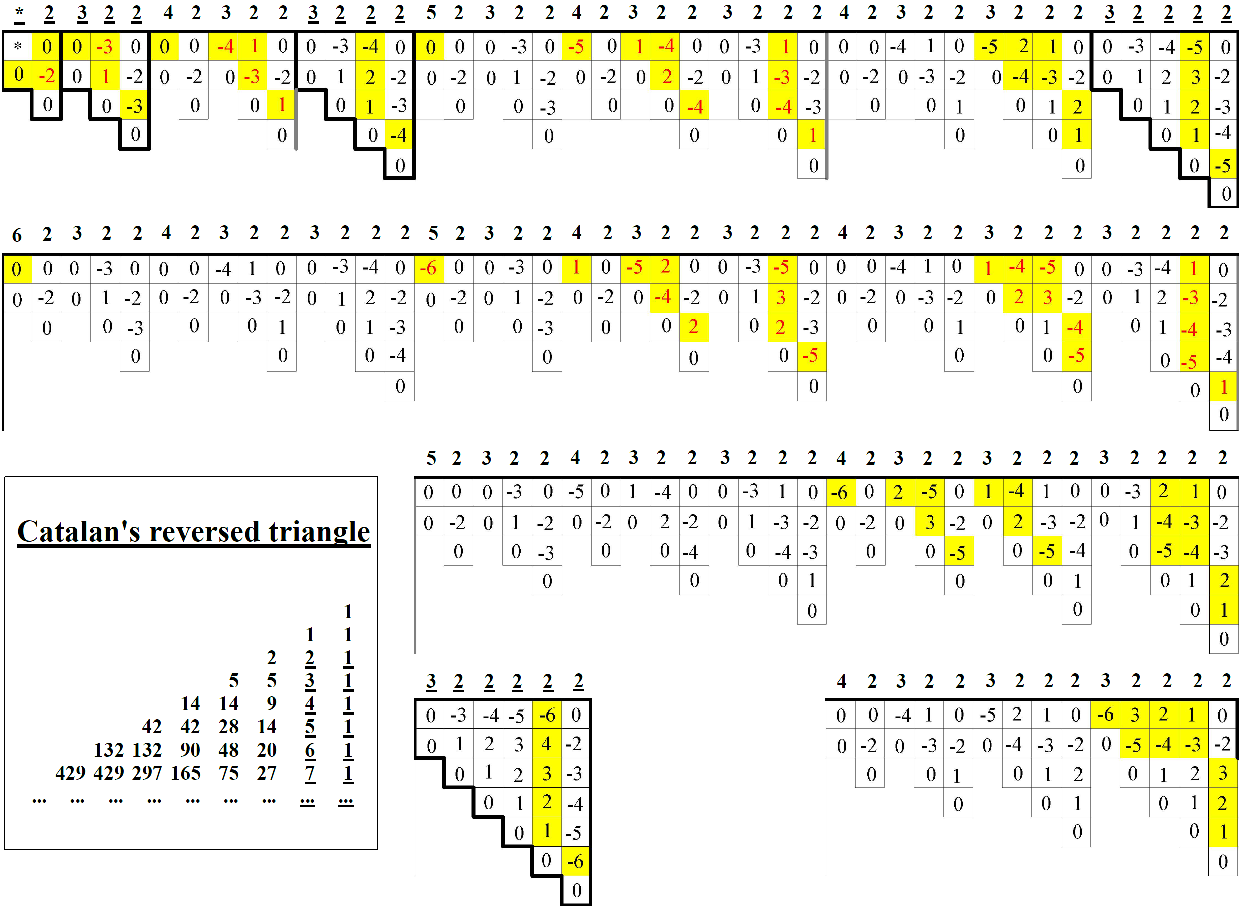}
\caption{Extension of Table~\ref{tab2} and partial view of $\Delta'$, for $k=2,3,4,5,6,7$}
\label{fig3}
\end{figure}

\begin{figure}[htp]
\includegraphics[scale=.725]{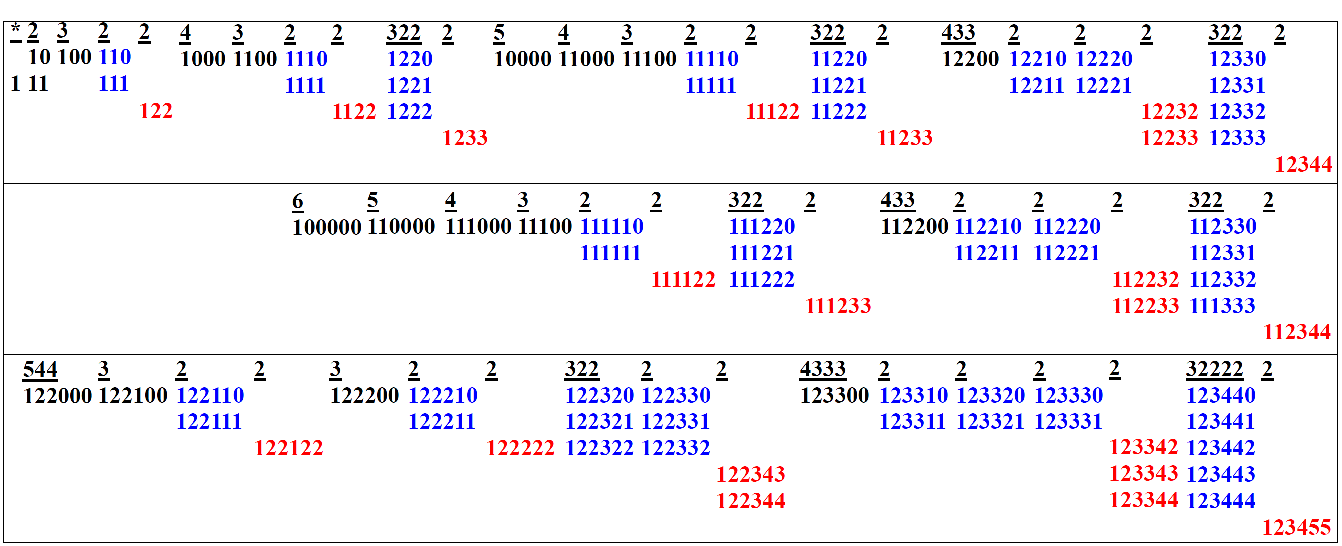}
\caption{Members of $\Phi_1$, for $k=2,3,4,5,6,7$}
\label{fig4}
\end{figure}

\begin{table}[htp]
$$\begin{array}{|ccccccccccccccc|}\hline
(1,0)\!\!\!&\!\!\!\!\!&\!\!\!&\!\!\!&\!\!\!&\!\!\!&\!\!\!&\!\!\!&\!\!\!&\!\!\!&\!\!\!&\!\!\!&\!\!\!&\!\!&\!\\
(2,0)\!\!\!&\!\!\!(1,\bar{2})\!\!&\!\!(1,0)\!&\!\!\!&\!\!\!&\!\!\!&\!\!\!&\!\!\!&\!\!\!&\!\!\!&\!\!\!&\!\!\!&\!\!\!&\!\!&\!\\\hline
(3,0)\!\!&\!\!(1,0)\!&\!\!\!&\!\!\!&\!\!\!&\!\!\!&\!\!\!&\!\!\!&\!\!\!&\!\!\!&\!\!\!&\!\!\!&\!\!\!&\!\!\!&\!\\
\!&\!(2,\bar{3})\!\!&\!\!(1,1)\!\!&\!\!(1,0)\!&\!\!\!&\!\!\!&\!\!\!&\!\!\!&\!\!\!&\!\!\!&\!\!\!&\!\!\!&\!\!&\!\!&\!\\
\!&\!\!&\!(2,0)\!\!&\!\!(1,\bar{2})\!\!&\!\!(1,\bar{3})\!\!&\!\!(1,0)\!&\!\!\!&\!\!\!&\!\!\!&\!\!\!&\!\!\!&\!\!\!&\!\!\!&\!\!\!&\!\\\hline
(4,0)\!\!&\!\!(1,0)\!&\!\!\!&\!\!\!&\!\!\!&\!\!\!&\!\!\!&\!\!\!&\!\!\!&\!\!\!&\!\!\!&\!\!\!&\!\!\!&\!\!\!&\!\\
\!&\!(2,0)\!\!&\!\!(1,\bar{2})\!\!&\!\!(1,0)\!&\!\!\!&\!\!\!&\!\!\!&\!\!\!&\!\!\!&\!\!\!&\!\!\!&\!\!\!&\!\!\!&\!\!\!&\!\\
\!&\!(3,\bar{4})\!\!&\!\!(1,0)\!\!\!&\!\!\!&\!\!\!&\!\!\!&\!\!\!&\!\!\!&\!\!\!&\!\!\!&\!\!\!&\!\!\!&\!\!\!&\!\!\!&\\
\!&\!\!&\!(2,1)\!\!&\!\!(1,\bar{3})\!\!&\!\!(1,0)\!&\!\!\!&\!\!\!&\!\!\!&\!\!\!&\!\!\!&\!\!\!&\!\!\!&\!\!\!&\!\!&\!\\
\!&\!\!\!&\!\!\!&\!(2,0)\!\!&\!\!(1,\bar{2})\!\!&\!\!(1,1)\!\!&\!\!(1,0)\!&\!\!\!&\!\!\!&\!\!\!&\!\!\!&\!\!\!&\!\!\!&\!\!&\!\\\hline
(5,0)\!\!&\!\!(1,0)\!&\!\!\!&\!\!\!&\!\!\!&\!\!\!&\!\!\!&\!\!\!&\!\!\!&\!\!\!&\!\!\!&\!\!\!&\!\!\!&\!\!\!&\!\\
\!\!\!&\!\!\!(2,0)\!\!&\!\!(1,\bar{2})\!\!&\!\!(1,0)\!&\!\!\!&\!\!\!&\!\!\!&\!\!\!&\!\!\!&\!\!\!&\!\!\!&\!\!\!&\!\!\!&\!\!\!&\!\\
\!&\!(3,0)\!\!&\!\!(1,0)\!&\!\!\!&\!\!\!&\!\!\!&\!\!\!&\!\!\!&\!\!\!&\!\!\!&\!\!\!&\!\!\!&\!\!\!&\!\!&\!\\
\!&\!\!&\!(2,\bar{3})\!\!&\!\!(1,1)\!\!&\!\!(1,0)\!&\!\!\!&\!\!\!&\!\!\!&\!\!\!&\!\!\!&\!\!\!&\!\!&\!&\!\!&\!\\
\!&\!\!\!&\!\!\!&\!(2,0)\!\!&\!\!(1,\bar{2})\!\!&\!\!(1,\bar{3})\!\!&\!\!(1,0)\!&\!\!\!&\!\!\!&\!\!\!&\!\!\!&\!\!\!&\!\!\!&\!\!&\\
\!&\!(4,\bar{5})\!\!&\!\!(1,0)\!&\!\!\!&\!\!\!&\!\!\!&\!\!\!&\!\!\!&\!\!\!&\!\!\!&\!\!\!&\!\!\!&\!\!\!&\!\!&\!\\
\!&\!\!&\!(2,0)\!\!&\!\!(1,\bar{2})\!\!&\!\!(1,0)\!&\!\!\!&\!\!\!&\!\!\!&\!\!\!&\!\!\!&\!\!\!&\!\!\!&\!\!\!&\!\!&\!\\
\!&\!\!&\!(3,1)\!\!\!&\!\!\!(1,0)\!&\!\!\!&\!\!\!&\!\!\!&\!\!\!&\!\!\!&\!\!\!&\!\!\!&\!\!\!&\!\!\!&\!\!\!&\!\\
\!&\!\!\!&\!\!\!&\!(2,\bar{4})\!\!&\!\!(1,2)\!\!&\!\!(1,0)\!&\!\!\!&\!\!\!&\!\!\!&\!\!\!&\!\!\!&\!\!\!&\!\!\!&\!\!\!&\!\\
\!&\!\!\!&\!\!\!&\!\!&\!(2,0)\!\!&\!\!(1,\bar{2})\!\!&\!\!(1,\bar{4})\!\!&\!\!(1,0)\!&\!\!\!&\!\!\!&\!\!\!&\!\!\!&\!\!\!&\!\!\!&\!\\
\!&\!\!\!&\!\!\!&\!(3,0)\!\!&\!\!(1,0)\!&\!\!\!&\!\!\!&\!\!\!&\!\!\!&\!\!\!&\!\!\!&\!\!\!&\!\!\!&\!\!&\!\\
\!&\!\!\!&\!\!\!&\!\!&\!(2,\bar{3})\!\!&\!\!(1,1)\!\!\!&\!\!\!(1,0)\!&\!\!\!&\!\!\!&\!\!\!&\!\!\!&\!\!\!&\!\!\!&\!\!&\!\\
\!&\!\!\!&\!\!\!&\!\!\!&\!\!\!&\!(2,1)\!\!&\!\!(1,\bar{3})\!\!&\!\!(1,\bar{4})\!\!\!&\!\!\!(1,0)\!&\!\!\!&\!\!\!&\!\!\!&\!\!\!&\!\!&\!\\
\!&\!\!\!&\!\!\!&\!\!\!&\!\!\!&\!\!\!&\!\!\!(2,0)\!\!&\!\!(1,\bar{2})\!\!&\!\!(1,\bar{3})\!\!\!&\!\!\!(1,1)\!\!\!&\!\!\!(1,0)\!&\!\!\!&\!\!\!&\!\!&\!\\
\!&\!\!&\!(4,0)\!\!&\!\!(1,0)\!&\!\!\!&\!\!\!&\!\!\!&\!\!\!&\!\!\!&\!\!\!&\!\!\!&\!\!\!&\!\!&\!\!&\!\\
\!&\!\!\!&\!\!\!&\!(2,0)\!\!&\!\!(1,\bar{2})\!\!&\!\!(1,0)\!&\!\!\!&\!\!\!&\!\!\!&\!\!\!&\!\!\!&\!\!\!&\!\!\!&\!\!\!&\!\!\\
\!\!\!&\!\!\!&\!\!&\!(3,\bar{4})\!\!\!&\!\!\!(1,0)\!&\!\!\!&\!\!\!&\!\!\!&\!\!\!&\!\!\!&\!\!\!&\!\!\!&\!\!\!&\!\!&\!\\
\!&\!\!\!&\!\!\!&\!\!&\!(2,1)\!\!\!&\!\!\!(1,\bar{3})\!\!\!&\!\!\!(1,0)\!&\!\!\!&\!\!\!&\!\!\!&\!\!\!&\!\!\!&\!\!\!&\!\!\!&\!\!\!\\
\!&\!\!\!&\!\!\!&\!\!\!&\!\!\!&\!\!(2,0)\!\!\!&\!\!\!(1,\bar{2})\!\!\!&\!\!\!(1,1)\!\!&\!\!(1,0)\!\!\!&\!\!\!&\!\!\!&\!\!\!&\!\!\!&\!\!\!&\!\\
\!\!\!&\!\!\!&\!\!\!&\!\!\!&\!(3,\bar{5})\!\!\!&\!\!\!(1,0)\!\!\!&\!\!\!&\!\!\!&\!\!\!&\!\!\!&\!\!\!&\!\!&\!\!&\!\!&\!\\
\!&\!\!\!&\!\!\!&\!\!\!&\!\!\!&\!\!(2,2)\!\!&\!\!(1,\bar{4})\!\!&\!\!(1,0)\!\!\!&\!\!\!&\!\!\!&\!\!\!&\!\!\!&\!\!\!&\!\!&\!\\
\!&\!\!\!&\!\!\!&\!\!\!&\!\!\!&\!\!&\!(2,1)\!\!&\!\!(1,\bar{3})\!\!&\!\!(1,1)\!\!&\!\!(1,0)\!&\!\!\!&\!\!\!&\!\!\!&\!\!\!&\!\\
\!&\!\!\!&\!\!\!&\!\!\!&\!\!\!&\!\!\!&\!\!\!&\!(2,0)\!\!&\!\!(1,\bar{2})\!\!&\!\!(1,2)\!\!&\!\!(1,1)\!\!&\!\!(1,0)\!\!\!&\!\!\!&\!\!&\!\\
\!&\!\!\!&\!\!\!&\!\!\!&\!\!\!&\!\!(3,0)\!\!\!&\!\!\!(1,0)\!\!\!&\!\!\!&\!\!\!&\!\!\!&\!\!\!&\!\!\!&\!\!&\!\!&\!\\
\!&\!\!\!&\!\!\!&\!\!\!&\!\!\!&\!\!&\!(2,\bar{3})\!\!&\!\!(1,1)\!\!&\!\!(1,0)\!\!\!&\!\!\!&\!\!\!&\!\!\!&\!\!&\!\!&\!\\
\!\!\!&\!\!\!&\!\!\!&\!\!\!&\!\!\!&\!\!\!&\!\!\!&\!\!\!(2,\bar{4})\!\!&\!\!(1,2)\!\!&\!\!(1,1)\!\!&\!\!(1,0)\!\!\!&\!\!\!&\!\!\!&\!\!\!&\!\\
\!\!\!&\!\!\!&\!\!\!&\!\!\!&\!\!\!&\!\!\!&\!\!\!&\!\!\!&\!(2,\bar{5})\!\!&\!\!(1,3)\!\!&\!\!(1,2)\!\!&\!\!(1,1)\!\!&\!\!(1,0)\!&\!\!&\!\\
\!&\!\!\!&\!\!\!&\!\!\!&\!\!\!&\!\!\!&\!\!\!&\!\!\!&\!\!\!&\!(2,0)\!\!&\!\!(1,\bar{2})\!\!&\!\!(1,\bar{3})\!\!&\!\!(1,\bar{4})\!\!&\!\!(1,\bar{5})\!\!&\!\!(1,0)\\\hline
\end{array}$$
\caption{Left-to-right list representation of  $\mathcal T_6$ whose nodes are pairs $(i(\alpha),h(\alpha))$ for the subsequent RGS's $\alpha$ in $\mathcal S$, and if some $h(\alpha)$ equals a negative integer $-\eta<0$, then it is shown as $\bar{\eta}$. The leftmost column shows the children of the root $(*,*)$ of $\mathcal T_6$.}
\label{tab3}
\end{table}

In  Figure~\ref{fig3}, the heading line of the top layer extends and continues the third heading line of Table~\ref{tab2}, its entries leading corresponding columns of values $h(\alpha)$, for $k<7$. This setting can be also seen as a left-to-right list representation of  $\mathcal T_6$ in Table~\ref{tab3}, whose nodes are pairs $(i(\alpha),h(\alpha))$ for the successive RGS's $\alpha$ in $\mathcal S$, where if some $h(\alpha)$ equals a negative integer $-\eta<0$, then is shown as $\bar{\eta}$, with the minus sign preceding $\eta$ shown as a bar over $\eta$. With such notation, the leftmost column of Table~\ref{tab3} shows the children of the root $(*,*)$ of $\mathcal T_6$. The adequately indented subsequent columns show the remaining descendant nodes at increasing distances from $(*,*)$. Also in Table~\ref{tab3}, horizontal lines separate the node sets of $\mathcal T_3-(*,*)$, $\mathcal T_4-\mathcal T_3$, $\mathcal T_5-\mathcal T_4$ and $\mathcal T_6-\mathcal T_5$.

By reading the entries of the successive columns of Table~\ref{tab2}, and more extensively in Figure~\ref{tab3}, etc., and then writing them from left to right, we obtain
the integer sequence $h(\mathcal S)$ formed by the values $h(\alpha)$ associated to the RGS's $\alpha$ of $\mathcal S$. For example, starting with Table~\ref{tab2}, we have that $h(\mathcal S)=(h(0),\ldots,h(41),\ldots)=$ $(*,\,0,\,0,-2,\,0,\,0,\,0,-3,\,1,\,0,\,0,-2,-3,\,0,\,0,\,0,$ $0,-2,0,\,-4,\,0,\,1,-3,\,0,\,0,-2,\,1,\,0,\,0,\,0,-3,\,1,\,0,-4,\,2,\,1,\,0,\,0,-2,-3,-4,\,0,\,\ldots).$

\subsection{Sequence of updates of Dyck-nest signatures}\label{touse}

The numbers in Italics in Table~\ref{tab2} initiate the subsequence $h(\Phi_1)$ of $h$-values of a subsequence $\Phi_1$ of $\mathcal S$, that will allow the continuation of the sequence of updates of the Dyck-nest signatures. These numbers reappear and are extended, in yellow squares in Figure~\ref{fig3}. Expressing $h(\Phi_1)$ with its initial terms as in Table~\ref{tab2}, we may write $h(\Phi_1)=(h(j) ; j=1,\,2,\,3,\,5,\,7,\,8,\,12,\,14,\,19,\,21,\,22,\,27,\,34,\,35,\,36,\,41,\,\ldots) =$ $(0,\,0,-2,\,0,-3,\,1,-3,$ $0,-4,\,1,-3,\,1,-4,\,2,\,1,-4,\,\ldots).$

In order to use $\Phi_1$, we recur to {\it Catalan's reversed triangle} $\Delta'$, whose initial lines, for $k=0,1,\ldots,7$, are shown on the lower left enclosure of Figure~\ref{fig3} and is obtained in general from Catalan's triangle $\Delta$ \cite{D2} by reversing its lines, so that with notation from \cite{D2}, the portion of $\Delta'$ shown in Figure~\ref{fig3} may be written as in Table~\ref{tab4}.

\begin{table}[htp]
$$\begin{array}{|c|c|c|c|c|c|c|c|c|}\hline
&&&&&&&&\tau_0^0=1\\
&&&&&&&\tau_1^1=1&\tau_0^1=1\\
&&&&&&\tau_2^2=2&\tau_1^2=2&\tau_0^2=1\\
&&&&&\tau_3^3=5&\tau_2^3=5&\tau_1^3=3&\tau_0^3=1\\
&&&&\tau_4^4=14&\tau_3^4=14&\tau_2^4=9&\tau_1^4=4&\tau_0^4=1\\
&&&\tau_5^5=42&\tau_4^5=42&\tau_3^5=28&\tau_2^5=14&\tau_1^5=5&\tau_0^5=1\\
&&\tau_6^6=132&\tau_5^6=132&\tau_4^6=90&\tau_3^6=48&\tau_2^6=20&\tau_1^6=6&\tau_0^6=1\\
&\tau_7^7=429&\tau_6^7=429&\tau_5^7=297&\tau_4^7=165&\tau_3^7=75&\tau_2^7=27&\tau_1^7=7&\tau_0^7=1\\
\cdots\cdots&\cdots\cdots&\cdots\cdots&\cdots\cdots&\cdots\cdots&\cdots\cdots&\cdots\cdots&\cdots\cdots&\cdots\cdots\\\hline
\end{array}$$
\caption{An initial detailed portion of Catalan's reversed triangle $\Delta'$.}
\label{tab4}
\end{table}

\subsection{Formations}\label{form}

Both in Table~\ref{tab2} and at the top layer of Figure~\ref{fig3}, we have the representations (to be called {\it formations}) of:

\begin{enumerate}
\item[{\bf(i)}] $(A_1^1)$, namely the leftmost column, (just $C_1=\tau_1^1=1$ columns), with $C_2=2$ entries;
\item[{\bf(ii)}] $(A_1^1|A_2^2)$, namely the $C_2=\tau_2^2=\tau_1^2=2$ leftmost columns, with a total of $C_3=5$ entries;
\item[{\bf(iii)}] $(A_1^1|A_2^2|A_3^3)$, namely the $C_3=\tau_3^3=\tau_2^3=5$ leftmost columns, with $C_4=14$ entries;
\item[{\bf(iv)}] $(A_1^1|A_2^2|A_3^3|A_4^4)$, namely the $C_4=\tau_4^4=\tau_3^4=14$ columns in Table~\ref{tab2} or the $C_4=14$ leftmost columns in Figure~\ref{fig3}, with a total of $C_5=42$ entries;
and
\item[{\bf(v)}] $(A_1^1|A_2^2|A_3^3|A_4^4|A_5^5)$, namely the top $C_5=\tau_5^5=\tau_4^5=42$ columns in Figure~\ref{fig3}, with a total of $C_6=132$ entries.\end{enumerate} These five formations correspond respectively to the trees $\mathcal T_2$, $\mathcal T_3$, $\mathcal T_4$, $\mathcal T_5$ and $\mathcal T_6$.
We subdivide the sets of respective columns according to the corresponding lines of $\Delta'$ considered as integer partitions $\Delta'_{k-2}$, namely: $\Delta'_0=(1)$, $\Delta_1=(1,1)$, $\Delta'_2=(2,2,1)$, $\Delta'_3=(5,5,3,1)$, $\Delta'_4=(14,14,9,4,1)$, and $\Delta'_5=(42,42,28,14,5,1)$ to be discussed subsequently.

Figure~\ref{fig3} contains the continuation for $k=7$ of the commented formations, extending the mentioned top layer of $\tau_5^5=42$ columns with a second and third layers (having $\tau_4^5=42$ and $\tau_3^5=28$ columns, respectively) and then with two additional parts in the fourth layer (having $\tau_2^5=14$ on the right, and $\tau_1^5+\tau_0^5=5+1$ columns on the left, respectively), and representing all of $\mathcal T_7$. These numbers of columns, namely (42,42,28,14,5,1), correspond to the sixth line $\Delta'_5$ of $\Delta'$, namely $\Delta'_5=(\tau_5^5,\tau_4^5,\tau_3^5,\tau_2^5,\tau_1^5,\tau_0^5)$.

\begin{figure}[htp]
\includegraphics[scale=.735]{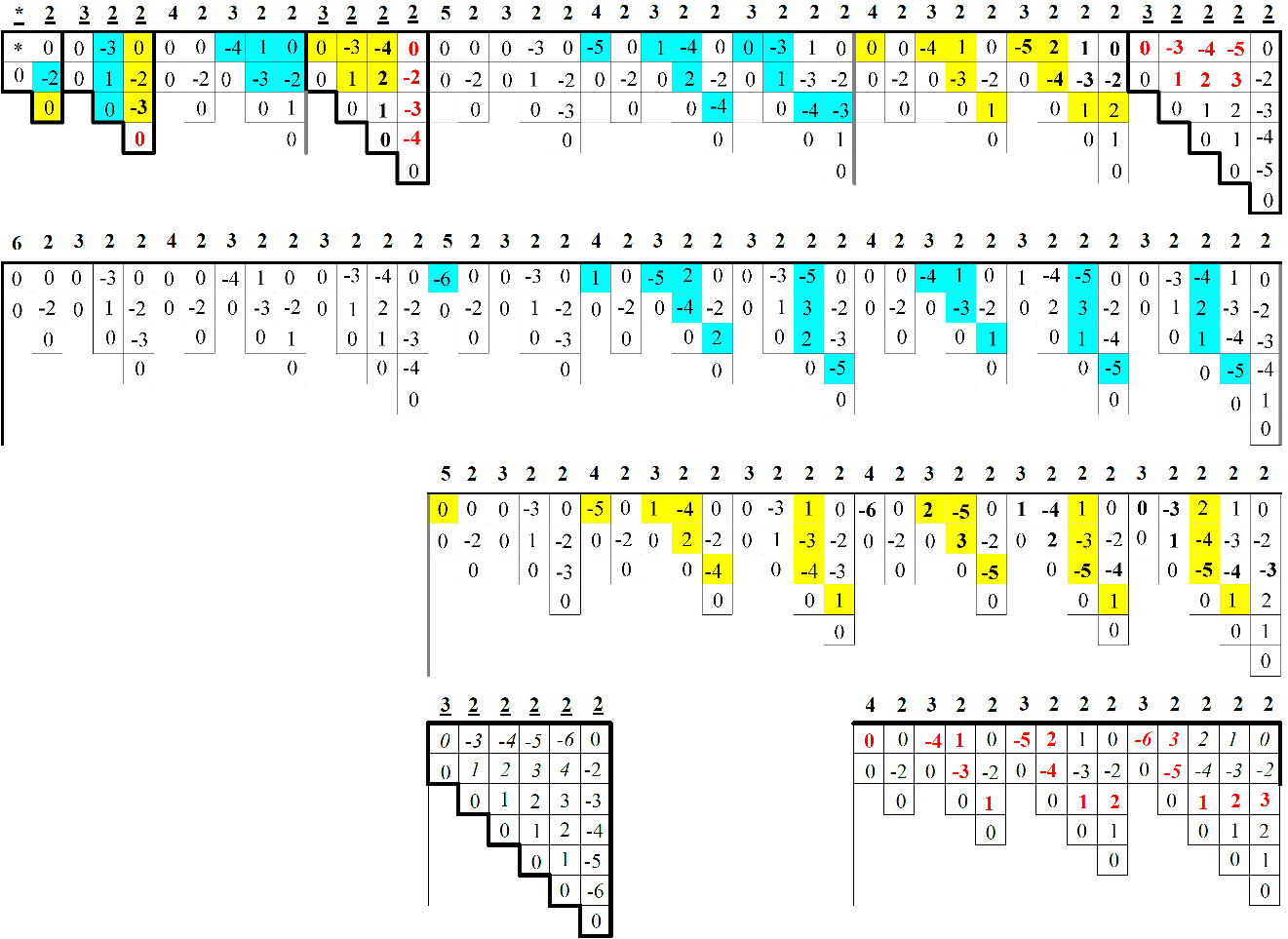}
\caption{Information for $\Phi_2,\Phi_3,\Phi_4,\Phi_5$}
\label{fig5}
\end{figure}

Still in Figure~\ref{fig3} for $\mathcal T_7$, the first $\tau_5^5=42$ columns (top layer) have lengths correspondingly equal to the lengths of the subsequent $\tau_4^5=42$ columns (second layer, delimited on the right by a thick gray vertical segment). Of these, the final 28 columns have lengths correspondingly equal to the lengths of the subsequent $\tau_3^5=28$ columns (third layer). Of these, the final 14 columns have lengths correspondingly equal to the lengths of the subsequent $\tau_2^5=14$ columns (fourth right layer). Of these, the final 5 columns have lengths correspondingly equal to the lengths of the subsequent $\tau_1^5=5$ columns (in the fourth left layer). It remains just $\tau_0^5=1$ column, formed by $k=7$ values of $h(\alpha)$. The said numbers of columns account for the partition
$\Delta'_5=(42,42,28,14,5,1)$, representing all the columns associated with the maximal paths of $\mathcal T_7$ formed by nodes associated with RGS's $\alpha$ with $i(\alpha)=1$.
Similar cases are easy to obtain in relation to $\mathcal T_k$, for $k<7$, where thick gray vertical segments delimit on the right the 14 (resp., 5) columns next to the first 14 (resp., 5) columns; (the same could have been done for the two columns next to the first two columns). A similar observation holds for every other row of $\Delta'$.

Some of the heading numbers in Figure~\ref{fig3} appear underlined, corresponding to the final $k-1=\tau_1^{k-2}+\tau_0^{k-2}=(k-2)+1$ columns for each exemplified $\mathcal T_k$. The resulting column sets appear encased with a thicker border.

\subsection{Main results}\label{5dogs}

The subsequence $\Phi_1$ of $\mathcal S$, a member of a family of subsequences $\{\Phi_j ; 1\le j\in\mathbb{Z}\}$ satisfying for $j>1$ the rules 1--3 below, is such that $i(\Phi_1)$ is the subsequence of $i(\mathcal S)$ formed by all indices $i(\alpha)$ larger than 1, exemplified in the heading line of Figure~\ref{fig3}. The mentioned rules 1--3 are as follows:

\begin{table}[htp]
$$\begin{array}{||r|r||r|r||r|r||r|r||r|r||r|r||}\hline
\alpha&h(\alpha)&\alpha_1&h(\alpha_1)&\alpha_2&h(\alpha_2)&\alpha_3&h(\alpha_3)&\alpha_4&h(\alpha_4)&\alpha_5&h(\alpha_5)\\\hline\hline
0&*&00&*&10&0&&&&&&\\
1&0&01&{\bf 0}&11&{\it -2}&12&{\bf 0}&&&&\\\hline
0&*&000&0&100&0&&&&&&\\
1&0&001&0&101&0&&&&&&\\
10&0&010&{\bf 0}&110&{\it -3}&120&{\bf 0}&&&&\\
11&-2&011&{\bf -2}&111&{\it 1}&121&{\bf -2}&&&&\\
12&0&012&0&112&0&122&1&123&0&&\\\hline
0&*&0000&*&1000&0&&&&&&\\
1&0&0001&0&1001&0&&&&&&\\
10&0&0010&0&1010&0&&&&&&\\
11&-2&0011&-2&1011&-2&&&&&&\\
12&0&0012&0&1012&0&&&&&&\\
100&0&0100&{\bf 0}&1100&{\it -4}&1200&{\bf 0}&&&&\\
101&0&0101&0&1101&0&1201&0&&&&\\
110&-3&0110&{\bf -3}&1110&{\it 1}&1210&{\it -3}&&&&\\
111&1&0111&{\bf 1}&1111&{\it -3}&1211&{\bf 1}&&&&\\
112&0&0112&0&1112&0&1212&0&&&&\\
120&0&0120&0&1120&0&1220&-4&1230&0&&\\
121&-2&0121&-2&1121&{\bf -2}&1221&{\it 2}&1231&{\bf -2}&&\\
122&-3&0122&{\bf -3}&1122&{\it 1}&1222&{\bf 1}&1232&{\it -3}&&\\
123&0&0123&0&1123&0&1223&{\bf 0}&1233&{\it -4}&1234&{\bf 0}\\\hline
\end{array}$$
\caption{Example for Theorem~\ref{alfin}, where the lists corresponding to $\mathcal T_2$, $\mathcal T_3$ and $\mathcal T_4$ are represented according to the respective pairs $(\alpha,h(\alpha))$ indicating column pairs $(\alpha,h(\alpha))$ and $(\alpha_j,h(\alpha_j))$, for $j=1,2,3,4,5$, as shown in the heading line of the table.}
\label{tab5}
\end{table}

\begin{enumerate}
\item the first term of $\Phi_j$ is
\[
\phi_1=\begin{cases}
\mbox{the RGS }1,&\mbox{ if }j=1;\\
\mbox{the smallest RGS with suffix }(j-1)(j-1),&\mbox{ if  }j>1;\\
\end{cases}
\]
\item if $\alpha=a_{k-1}\cdots a_1\in\Phi_j$ and either $a_1=0$ or $a_{k-1}\cdots a_2a'_1\notin\Phi_j$ for every $a'_1<a_1$, then $\alpha|j\in\Phi_j$ for $j\in[0,a_1]$; in that case, if $\alpha_{j'}\in\Phi_j$ with $\alpha_{j'}=a_{k-1}\cdots a_2(a_1+j')$, for $1\le j'\in\mathbb{Z}$, then $\alpha_{j'}|j\in\Phi_j$;
\item for each maximal subsequence $S=(\iota,2,\ldots,2)$ of $i(\mathcal S)$ ($\iota>2$), if there are $z$ penultimate terms $i=2$ of $S$ ($z>0$) heading maximal vertical prefixes of a fixed length $y$ in $h(\Phi_j)$ ($y>0$) and ending at $h(\alpha_j)=h(a_{k-1}\cdots a_3(y+j)y)$ ($j\in[0,z[$), then $\alpha_{j'}=a_{k-1}\cdots a_3(y+z)j'\in\Phi_j$, for $j'\in]y,y+z]$, yielding a vertical suffix $\{h(\alpha_{j'});j'\in]y,y+z]\}$.
\end{enumerate}

\begin{example}
The three rectangular enclosures of Figure~\ref{fig4} contain in left-to-right columnwise form (only showing those columns with yellow squares in Figure~\ref{fig3}) the subsequence $\Phi_1$ of $\mathcal S$ in Subsection~\ref{touse}, (of RGS's $\alpha$ in yellow squares). Such enclosures contain in red the RGS's for the prefixes in item 3 above, and in blue the RGS's for the suffixes.
\end{example}

The columns in the formations of Subsection~\ref{form}, as in Figure~\ref{fig3}, end up with null values $h(\alpha)=0$, which correspond to the terminal nodes $\alpha$ of maximal paths that after their initial nodes $\beta$ with $i(\beta)>1$, have the remaining nodes $\beta'$ with $i=i(\beta')=1$. Clearly, the associated nodes $\alpha$ have degree 1 in the pertaining trees $\mathcal T_k$.

\begin{theorem}\label{0} Let $\alpha$ be a node of $\mathcal T_k$. Then,
\begin{enumerate}
\item if $\alpha$ is a terminal node of a maximal path of $\mathcal T_k$ whose initial node $\beta$ has $i(\beta)>1$ and whose remaining nodes $\gamma$ have $i(\gamma)=1$, then $g(\alpha)=0$;
\item if $\alpha=a_{k-1}\cdots a_1$ with $a_{k-1}=1$ and $a_j=0$, for $j=1,\ldots, k-2$, then $g(\alpha)=0$.
\end{enumerate}
\end{theorem}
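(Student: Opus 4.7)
The plan is to treat the two items by direct analysis of the $i$-nested castling of Theorem~\ref{thm1}, tracking the two appearances of the relevant integer via the Dyck-path value assignment of Subsection~\ref{r4}.

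\emph{Item (2)} is a one-step computation. Since $\alpha=10^{k-2}$ has parent $0^{k-1}$ and $i(\alpha)=k-1$, applying Theorem~\ref{thm1} to $F(0^{k-1})=0\,1\cdots(k-2)\,(k-1)\,k\,k\,(k-1)\cdots 2\,1$ with $W^{k-1}=0\,1\cdots(k-2)$ and $Z^{k-1}=(k-1)\cdots 1$ gives middle $(k-1)\,k\,k$, and hence $c=k-1$, $X=(k-1)$, $Y=k\,k$. Therefore
\[
F(\alpha)=0\,1\cdots(k-2)\,k\,k\,(k-1)\,(k-1)(k-2)\cdots 2\,1,
\]
in which the two copies of $k-1$ sit at adjacent positions $k+1$ and $k+2$. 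Thus $A_{k-1}(\alpha)=0$, which is $g(\alpha)$ since $i(\alpha)=k-1$.

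\emph{Item (1).} The hypothesis $i(\beta)>1$ is the same as $b_1=0$, so the path has the form $\beta=b_{k-1}\cdots b_2\,0$, $\gamma_j=b_{k-1}\cdots b_2\,j$ for $j=1,\dots,m$, and maximality forces $m=b_2+1$ (the largest value of $a_1$ permitted by $a_1\le a_2+1$). The structural input needed is that, because $b_1=0$, a short induction on $\mathcal T_k$ (using that an $i>1$ castling leaves $W^1$ untouched) shows that position~$1$ of $F(\beta)$ is $1'$; equivalently the Dyck path $\rho(\beta)$ has only one layer-$1$ up-step. Every pair $(j',j'')$ with $j\ge 2$ therefore lies between the two $1$'s, and if we let $r$ be the number of top-level sibling pairs inside the $1$-nest, the top-down/left-to-right value assignment gives those $r$ layer-$2$ up-steps the consecutive values $r+1,r,\dots,2$ from left to right. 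Hence
\[
F(\beta)=0\,|\,1'\,|\,T_1\,|\,T_2\,|\cdots|\,T_r\,|\,1'',
\]
where $T_s$ is a Dyck sub-nest with outermost value $r-s+2$, these outermost values are pairwise distinct, and none of them occurs nested inside any $T_{s'}$ (nested entries sit at layer $\ge 3$, whose values are $\ge r+2$). The identification $r=m=b_2+1$ is forced by the bijection between nodes and Dyck nests (Corollary~\ref{t5}): each $i=1$ castling from $\beta$ that produces a valid nest rotates one top-level sibling, and the number of such valid castlings matches both $r$ and $b_2+1$.

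One then verifies by induction on $j\in[0,r]$ that
\[
F(\gamma_j)=0\,|\,T_{r-j+1}\,|\cdots|\,T_r\,|\,1'\,|\,T_1\,|\cdots|\,T_{r-j}\,|\,1''.
\]
For the step, the $i=1$ castling splits the middle at the first occurrence of $c_{j-1}+1=j+1$, which by the structural fact is precisely the left end of $T_{r-j+1}$, so $Y_{j-1}=T_{r-j+1}$ and $X_{j-1}$ is everything preceding it, and the castling rotates $T_{r-j+1}$ to the front. Setting $j=m=r$ yields $F(\alpha)=0\,|\,T_1\,|\cdots|\,T_r\,|\,1'\,1''$, so the two $1$'s are adjacent and $g(\alpha)=A_1(\alpha)=0$. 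The principal obstacle is the structural decomposition of $F(\beta)$ in the previous paragraph; once the outermost values of the $T_s$ are certified from the value assignment (with $u_1=1$) and from the identification $r=b_2+1$, the induction itself is mechanical bookkeeping, and termination at $j=r$ is automatic since any attempted $(r+1)$-th castling would have to split $T_1$ at an internal layer-$\ge 3$ entry, producing a non-nested (hence non-Dyck) string.
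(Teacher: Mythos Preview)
Your proof is correct and reaches the same conclusions as the paper: item~1 because $F(\alpha)$ contains the substring $1'1''$, and item~2 because $F(\alpha)$ contains $(k-1)'(k-1)''$ (in fact all $j'j''$). The paper's own proof simply asserts these structural facts in two sentences, whereas you actually derive them by tracking the castling.

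A few minor remarks on presentation. Your phrase ``an $i>1$ castling leaves $W^1$ untouched'' undersells the point; what you use is that for $i\ge 2$ the prefix $W^i$ has length $\ge 2$, so positions $0$ and $1$ are both fixed, and since every ancestor of $\beta$ also has last entry $0$ (hence $i\ge 2$), position~$1$ is inherited from $F(0^{k-1})$. Your ``layer-$2$'' for the outermost $T_s$ up-steps is layer~$3$ in the full anchored path (the initial $0$-bit contributes a layer), but the conclusion that their values are $r+1,\dots,2$ is correct since all deeper up-steps receive values $\ge r+2$. Finally, the identification $r=b_2+1$ is cleanest when phrased as: if $r<b_2+1$ then the valid germ $\gamma_{r+1}$ would, by Theorem~\ref{thm1}, have a valid nest, yet the castling visibly puts $(r+1)''$ before $(r+1)'$; and if $r>b_2+1$ then your formula for $F(\gamma_{b_2+1})$ still has material between $1'$ and $1''$, but a further castling would be defined on a non-germ, which the bijection of Corollary~\ref{t5} forbids. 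You state this, but somewhat elliptically.
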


\begin{proof} Item 1 in the statement arises because of the presence of the substring $1'1''$ in $F(\alpha)$. Item
 2 arises because of the presence of all substrings $j'j''$ in $F(\alpha)$, for $j=1,\ldots,k-1$.
\end{proof}

\begin{theorem}\label{1} Let $\alpha_1$ be a node of $\mathcal T_k$. Then, $\alpha'_1=1|\alpha_1$ is a node of $\mathcal T_{k+1}$ and
\begin{enumerate}
\item if $h(\alpha_1)\in\Phi_1$, then $h(\alpha'_1)\in\Phi_1$ and $h(\alpha'_1)=k-h(\alpha_1)$;
\item if $h(\alpha_1)\notin\Phi_1$, then $h(\alpha'_1)\notin\Phi_1$ and $h(\alpha'_1)=h(\alpha_1)$.
\end{enumerate}
\end{theorem}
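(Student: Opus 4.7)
The plan is to study the natural map $\iota:\mathcal T_k\to\mathcal T_{k+1}$ defined by $\iota(\alpha_1)=1|\alpha_1=\alpha'_1$, and to track how the Dyck nest and its signature transform under $\iota$. That $\alpha'_1$ is a node of $\mathcal T_{k+1}$ is immediate: for $\alpha_1\neq 0^{k-1}$ with rightmost nonzero position $i(\alpha_1)=i$, prepending a $1$ leaves the rightmost nonzero position unchanged, so $i(\alpha'_1)=i$ and the parent of $\alpha'_1$ in $\mathcal T_{k+1}$ is $1|\beta(\alpha_1)$; for $\alpha_1=0^{k-1}$ the parent of $\alpha'_1=1\,0^{k-1}$ is the root $0^k$ of $\mathcal T_{k+1}$. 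Thus $\iota$ is an isomorphism of ordered trees from $\mathcal T_k$ onto the subtree of $\mathcal T_{k+1}$ rooted at $1\,0^{k-1}$.

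Next I would compare $F(\alpha_1)$ with $F(\alpha'_1)$ along this isomorphism, using Theorem~\ref{thm1}. For the base case, a direct $k$-nested castling of $F(0^k)$ yields $F(1\,0^{k-1})$, which is $F(0^{k-1})$ with the new outermost pair $(k+1)'(k+1)''$ inserted immediately after its ascending prefix $0\,1\cdots(k-1)$. For the inductive step with $\alpha_1\neq 0^{k-1}$, the equality $i(\alpha_1)=i(\alpha'_1)=i$ means that the same $i$ drives the castlings at both $\alpha_1$ in $\mathcal T_k$ and $\alpha'_1$ in $\mathcal T_{k+1}$, and a position-tracking argument (justified by Theorem~\ref{signest}, which recovers $F$ from its signature) shows that $F(\alpha'_1)$ equals $F(\alpha_1)$ with $(k+1)'(k+1)''$ inserted at one specific location determined by the castling path from $1\,0^{k-1}$ down to $\alpha'_1$.

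The signature relation then follows from Theorem~\ref{id}: under this insertion, the coordinate $A_i(\alpha'_1)$ equals $A_i(\alpha_1)+1$ precisely when the inserted pair $(k+1)'(k+1)''$ lies in the open interval $]i',i''[$ of $F(\alpha'_1)$, and equals $A_i(\alpha_1)$ otherwise. In parallel, comparing the positions of $(k+1)'(k+1)''$ in $F(\alpha'_1)$ and of $k'k''$ in $F(\alpha_1)$ relative to the interval $]i',i''[$ determines whether the Corollary~\ref{cuc} case labels (B) and (C) are interchanged or preserved. A direct computation with display~(\ref{(1)}) then gives $h(\alpha'_1)=k-h(\alpha_1)$ when the case labels swap, and $h(\alpha'_1)=h(\alpha_1)$ when they are preserved.

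The main obstacle is to identify those $\alpha_1$ for which the swap occurs, and simultaneously to verify the biconditional $\alpha_1\in\Phi_1\Leftrightarrow\alpha'_1\in\Phi_1$ implicit in items~(1) and~(2). I would do this by a parallel induction along the formations of Subsection~\ref{form}: rule~1 of Subsection~\ref{5dogs} seeds the base element $1$, rule~2 propagates the condition along the child-appending chains $\alpha\mapsto\alpha|j$ in $\mathcal T_k$, and rule~3 handles the propagation across maximal substrings $(\iota,2,\ldots,2)$ of $i(\mathcal S)$, whose vertical prefix/suffix structure in Figure~\ref{fig3} mirrors the combinatorics of the castlings as $(k+1)'(k+1)''$ migrates through $F(\alpha'_1)$. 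Once that correspondence is established, items~(1) and~(2) follow directly from the signature computation above.
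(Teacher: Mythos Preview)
Your plan is workable but takes a longer, more structural route than the paper does. The paper's own proof is a two-sentence assertion: item~1 is exactly the case in which, during the castling of Theorem~\ref{thm1} from the parent $\beta$ to $\alpha$, the substring $k'k''$ passes from one side of $1'$ to the other; item~2 is the complementary case. No insertion lemma, no tracking of signatures, no appeal to rules~1--3 --- just this single geometric criterion, left to the reader to match against the definitions.

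Your approach instead proves an auxiliary structural fact: that $F(\alpha'_1)$ is literally $F(\alpha_1)$ with the new top pair $(k{+}1)'(k{+}1)''$ inserted at a single location. That claim is correct (and is a pleasant lemma in its own right; one checks it by induction along the path from $10^{k-1}$ to $\alpha'_1$, using that $i(\alpha_1)=i(\alpha'_1)$ forces the same $W^i,Z^i$ and the same value of $c$ at each step). From it your four-case signature computation follows cleanly: when the inserted pair lands inside $]i',i''[$ you gain one unit in $A_i$, and comparing that with whether $k'k''$ was already inside $]i',i''[$ for $\alpha_1$ tells you whether the (B)/(C) label flips or not. Note, incidentally, that the flip can go in either direction, and the two directions yield $h(\alpha'_1)=h(\alpha_1)-k$ and $h(\alpha'_1)=h(\alpha_1)+k$ respectively, so ``$k-h(\alpha_1)$'' as written is not what a direct computation gives; the paper's own example after the theorem in fact uses $h(1)-k$.

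The genuine soft spot in your plan is step~4: showing that the (B)/(C) flip coincides with $\alpha_1\in\Phi_1$. You gesture at an induction through rules~1--3 and the formations of Subsection~\ref{form}, but that is precisely the nontrivial content of the theorem, and your description (``rule~2 propagates \dots rule~3 handles \dots'') does not yet pin down why those combinatorial rules track exactly the set of $\alpha_1$ for which the inserted pair $(k{+}1)'(k{+}1)''$ lands on the opposite side of $]i',i''[$ from $k'k''$. The paper sidesteps this by asserting the equivalent ``$k'k''$ crosses $1'$'' criterion directly. If you want your route to be self-contained, you would need to turn that sketch into an actual inductive proof that the flip locus satisfies (and is determined by) rules~1--3; otherwise your argument and the paper's are equally terse at the point where $\Phi_1$ enters.
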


\begin{proof}
Item 1 in the statement occurs exactly when the substring $k'k''$ in $F(\alpha)$ changes position from one side of $1'$ to the opposite side in the procedure of Theorem~\ref{thm1} starting at the parent $\beta$ of $\alpha$ and ending at $\alpha$. Item 2 occurs exactly when that is not the case.
\end{proof}

\begin{example}
Since $\alpha_1=1$ is a node of $\mathcal T_2$ as in Theorem~\ref{0} item 1, then $\alpha'_1=1|\alpha_1=11$ is a node of $\mathcal T_3$ with $h(\alpha'_1)=h(11)=h(1)-k=0-2=-2\in\Phi_1$, by Theorem~\ref{1} item 1. This is indicated by $h(1)=0$ in the upper leftmost yellow square in Figure~\ref{fig3} and its accompanying $h(11)=-2$  as the upper leftmost red integer in the figure. Note that this pattern is continued by associating each yellow square in Figure~\ref{fig3} to a corresponding red integer for all $k<7$. We can annotate this via the successive pairs $(\alpha_1,h(\alpha_1))$ taken by reading the data in Figure~\ref{fig3} from left to right and from top downward:
$$^{(1(0),11(-2)), (10(0),110(-3)),
11(-2),111(1)),(100(0),1100(-4)),(110(-3),1110(1)),(111(1),1111(-3)),(122(-3),1122(1)).}$$ The last pair here arises from $h(122)=-3$, which follows from Corollary~\ref{2}, below.
\end{example}

\begin{theorem}\label{j}
Let $1<j\le k\in\mathbb{Z}$. Let $\alpha_j=1\cdots(j-1)(j-1)a_{k-j-1}\cdots a_1$ be a node of $\mathcal T_k$. Then,
$\alpha'_j=1\cdots (j-1)ja_{k-j-1}\cdots a_1$ is a node of $\mathcal T_k$ and
\begin{enumerate}
\item if $h(\alpha_j)\in\Phi_j$, then $h(\alpha'_j)=k-h(\alpha_j)$;
\item if $h(\alpha_j)\notin\Phi_j$, then $h(\alpha'_j)=h(\alpha_j)$.
\end{enumerate}
\end{theorem}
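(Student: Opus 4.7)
The plan is to extend the argument of Theorem~\ref{1} from the case $j=1$ to general $j>1$, treating Theorem~\ref{1} as the base case of an induction on $j$, with the castling framework of Theorem~\ref{thm1} and the dichotomy of Corollary~\ref{cuc} doing the main work.

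First, I would verify that $\alpha'_j$ is a node of $\mathcal{T}_k$. In $\alpha_j = 1\cdots(j-1)(j-1)a_{k-j-1}\cdots a_1$ the entry at position $k-j+1$ equals $j-1$, so incrementing the entry at position $k-j$ from $j-1$ to $j$ respects the RGS bound $a_{k-j} \le a_{k-j+1}+1 = j$, and the untouched suffix $a_{k-j-1}\cdots a_1$ remains admissible.

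Second, I would analyze the Dyck-nest transformation $F(\alpha_j) \to F(\alpha'_j)$. In the subcase where the suffix is $0^{k-j-1}$, we have $i(\alpha'_j) = k-j$ and $\alpha_j$ is the parent of $\alpha'_j$ in $\mathcal{T}_k$, so Theorem~\ref{thm1} gives $F(\alpha'_j)$ as the $(k-j)$-nested castling of $F(\alpha_j) = W^{k-j}|X|Y|Z^{k-j}$ that swaps $X$ and $Y$. In the general subcase, where the suffix is nonzero, the castlings associated with the suffix take place strictly inside the $X$- and $Y$-blocks, and I would argue that they commute with the outer $(k-j)$-castling in their effect on the position of $k'k''$ relative to $(k-j)'$, reducing the comparison of $h(\alpha_j)$ and $h(\alpha'_j)$ to the zero-suffix subcase.

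Third, and the heart of the argument, I would match the condition $\alpha_j \in \Phi_j$ to the event that $k'k''$ crosses the castling boundary. Rule~1 of Subsection~\ref{5dogs} seeds $\Phi_j$ at the smallest RGS whose Dyck nest has the canonical suffix $(j-1)(j-1)$ placing $k'k''$ inside $Y$; Rule~2 propagates membership along children with $i = 1$, for which Theorem~\ref{1} guarantees that the side of $k'k''$ relative to $1'$ (and hence to the castling boundary) is preserved; Rule~3 collects the suffixes generated by chained castlings at position~$2$, tracking the successive relocations of $k'k''$ within the inner blocks. A joint induction on $j$ and on depth-first order within $\mathcal{T}_k$ then yields that $\alpha_j \in \Phi_j$ holds exactly when the $(k-j)$-castling flips the side of $k'k''$, and by Corollary~\ref{cuc}(B)--(C) that flip converts the $h$-value from $h(\alpha_j)$ to $k - h(\alpha_j)$, whereas preserving the side leaves $h$ unchanged. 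The main obstacle will be this third step: precisely matching the recursive rules defining $\Phi_j$ with the combinatorial motion of $k'k''$ through the successive castlings, threading Theorem~\ref{1} for the propagation under $i = 1$ and inspecting the $i = 2$ transitions against Rule~3. Once this equivalence is secured, items~1 and~2 follow directly from Corollary~\ref{cuc}.
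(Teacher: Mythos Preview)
Your proposal is correct and follows essentially the same approach as the paper: the key mechanism is that membership in $\Phi_j$ corresponds exactly to the substring $k'k''$ switching sides of the castling boundary, which via Corollary~\ref{cuc} forces the dichotomy $h(\alpha'_j)=k-h(\alpha_j)$ versus $h(\alpha'_j)=h(\alpha_j)$. The paper's own proof is simply the one line ``Similar to the proof of Theorem~\ref{1}'', so your write-up is considerably more detailed than what the paper actually records; in particular, your verification that $\alpha'_j$ is a valid $k$-germ and your inductive matching of Rules~1--3 to the motion of $k'k''$ fill in justification that the paper leaves entirely implicit.
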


\begin{proof} Similar to the proof of Theorem~\ref{1}.
\end{proof}

\begin{corollary}\label{2}
Let $1\le k\in\mathbb{Z}$. Let $\alpha_2=11a_{k-3}\cdots a_1$ be a node in $\mathcal T_k$. Then, $\alpha'_2=12a_{k-3}\cdots a_1$ is a node of $\mathcal T_k$ and
\begin{enumerate}\item
if $h(\alpha_2)\in\Phi_2$, then $h(\alpha'_2)=k-h(\alpha_2)$;  \item if $h(\alpha_2)\notin\Phi_2$, then $h(\alpha'_2)=h(\alpha_2)$.\end{enumerate}
\end{corollary}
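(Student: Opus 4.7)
The plan is to derive Corollary~\ref{2} as the specialization of Theorem~\ref{j} to the case $j=2$. First I would verify that the notational templates match up: with $j=2$, the prefix $1\cdots(j-1)(j-1)$ appearing in Theorem~\ref{j} collapses to the two-symbol string $1\cdot 1 = 11$, the prefix $1\cdots(j-1)j$ collapses to $1\cdot 2 = 12$, and the tail index $k-j-1$ becomes $k-3$. Hence the $k$-germs $\alpha_2 = 11 a_{k-3}\cdots a_1$ and $\alpha'_2 = 12 a_{k-3}\cdots a_1$ named in Corollary~\ref{2} are respectively the $\alpha_j$ and $\alpha'_j$ of Theorem~\ref{j} at $j=2$, and the set $\Phi_2$ is the same set in both places.

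Next I would check that the remaining hypothesis $1 < j \le k$ of Theorem~\ref{j} is met. Here $j=2>1$ is immediate; moreover, the existence of a $k$-germ of the form $11 a_{k-3}\cdots a_1$ forces the underlying string length $k-1 \ge 2$, hence $k \ge 3 \ge j$. (When $k\le 2$ no such $\alpha_2$ exists, so the corollary is vacuously true and the printed lower bound $1\le k$ is harmless.) With both identifications confirmed, items (1) and (2) of Corollary~\ref{2} are exactly items (1) and (2) of Theorem~\ref{j} written at $j=2$, so no new argument is required: if $h(\alpha_2)\in\Phi_2$ then $h(\alpha'_2)=k-h(\alpha_2)$, and otherwise $h(\alpha'_2)=h(\alpha_2)$.

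The only possible snag is notational rather than mathematical: one must read $1\cdots(j-1)(j-1)$ correctly at $j=2$ as the length-$2$ string $11$ and not mistakenly expand it to $111$ or a longer run of ones; this is the step I would spell out explicitly so the reader sees that the substitution $j=2$ in Theorem~\ref{j} yields the exact statement of Corollary~\ref{2}. Once that reading is fixed, no new case analysis, no new invocation of the $i$-nested castling of Theorem~\ref{thm1}, and no fresh tracking of the $k'k''$ substring across $F(\beta)\to F(\alpha_2)\to F(\alpha'_2)$ is needed beyond what the proof of Theorem~\ref{j} already records.
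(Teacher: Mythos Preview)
Your proposal is correct and matches the paper's approach: the paper states Corollary~\ref{2} (and likewise Corollary~\ref{3}) without any separate proof, treating it as the immediate specialization of Theorem~\ref{j} to $j=2$. Your explicit check that the prefix $1\cdots(j-1)(j-1)$ collapses to $11$, that the tail index becomes $k-3$, and that the hypothesis $1<j\le k$ is forced by the existence of a $k$-germ of the required shape, spells out precisely what the paper leaves implicit.
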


\begin{example}
Applying Corollary~\ref{2} to $\alpha_2=11,110,111,112$, with respective $h(\alpha_2)=-2,-3,1,0\in\Phi_2$ yields $\alpha'_2=12,120,121,122$ with respective $h(\alpha'_2)=0,0,-2,-3$. In Figure~\ref{fig5}, the RGS's $\alpha_2$ are shown in light-blue squares while the  corresponding RGS's $\alpha'_2$ are shown in yellow squares. Figure~\ref{fig5} extends this coloring for $k\le 7$.
\end{example}

\begin{corollary}\label{3}
Let $1\le k\in\mathbb{Z}$. Let $\alpha_3=122a_{k-4}\cdots a_1$ be a node of $\mathcal T_k$. Then, $\alpha'_3=123a_{k-4}\cdots a_1$ is a node of $\mathcal T_k$ and
\begin{enumerate}
\item if $h(\alpha_3)\in\Phi_3$, then $h(\alpha'_3)=k-h(\alpha_3)$;\item if $h(\alpha_3)\notin\Phi_3$, then $h(\alpha'_3)=h(\alpha_3)$.\end{enumerate}
\end{corollary}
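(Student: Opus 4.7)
The plan is to deduce Corollary~\ref{3} from Theorem~\ref{j} by specializing $j=3$. The prefix $1\cdots(j-1)(j-1)$ in the hypothesis of Theorem~\ref{j} becomes $122$ when $j=3$, and $1\cdots(j-1)j$ becomes $123$, so the strings $\alpha_3=122a_{k-4}\cdots a_1$ and $\alpha'_3=123a_{k-4}\cdots a_1$ in the statement are precisely the $\alpha_j$ and $\alpha'_j$ produced by the theorem. The only range-of-$k$ constraint to check is $k\ge 3$; for smaller $k$, no germ of the form $122\cdots$ exists and the assertion is vacuous.

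Next I would verify that $\alpha'_3$ is a valid $k$-germ whenever $\alpha_3$ is. The only coordinate altered sits at position $k-3$, where the entry moves from $2$ to $3$; condition (b) in the germ definition is preserved because the entry at position $k-2$ equals $2$ and $2+1=3$, so $a_{k-3}\le a_{k-2}+1$ still holds. All other germ conditions are inherited verbatim from $\alpha_3$, and the parent-child structure of $\mathcal T_k$ then places $\alpha'_3$ in the tree. Moreover, the relation $i(\alpha'_3)=k-3$ identifies the position of the $i$-nested castling of Theorem~\ref{thm1} that carries $F(\alpha_3)$ to $F(\alpha'_3)$; this makes the machinery behind Theorem~\ref{j} directly applicable, with $\alpha_3$ playing the role of the parent in the update.

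Invoking Theorem~\ref{j} with $j=3$ then yields the two clauses of the corollary: $h(\alpha'_3)=k-h(\alpha_3)$ when $h(\alpha_3)\in\Phi_3$, and $h(\alpha'_3)=h(\alpha_3)$ when $h(\alpha_3)\notin\Phi_3$. This completes the deduction once Theorem~\ref{j} is in hand.

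The main obstacle, inherited from Theorem~\ref{j} itself, is the combinatorial reason why the sign-flip occurs precisely on $\Phi_3$. One must track the position of the substring $k'k''$ in the nests $F(\alpha_3)$ and $F(\alpha'_3)$ relative to the boundary of the $i$-nested castling prescribed by Theorem~\ref{thm1}, and confirm via items (B) and (C) of Corollary~\ref{cuc}, combined with the defining formula (\ref{(1)}) for $h$, that membership of $h(\alpha_3)$ in $\Phi_3$ corresponds exactly to the case where $k'k''$ migrates between the red substring $Y$ and the green substring $X$ across the castling; this migration is what toggles the representation of $g(o(\alpha))$ between the form $\ell$ and the form $k-\ell$ of Subsection~\ref{obs13}, producing the additive shift by $k$. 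When no such migration takes place, both $F(\alpha_3)$ and $F(\alpha'_3)$ fall under the same case of Subsection~\ref{obs13}, and the update integer is preserved.
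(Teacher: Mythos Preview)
Your deduction of Corollary~\ref{3} as the $j=3$ instance of Theorem~\ref{j} is exactly the paper's intended approach: the corollary is stated there without proof, immediately following the analogous $j=2$ case, as a direct specialization.

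One remark in your elaboration is inaccurate, though it does not damage the core argument. You assert that $i(\alpha'_3)=k-3$ and that $\alpha_3$ plays the role of the parent of $\alpha'_3$ in the castling of Theorem~\ref{thm1}. This holds only when $a_{k-4}=\cdots=a_1=0$; in general, if some $a_j\ne 0$ with $j<k-3$, then the rightmost nonzero entry of $\alpha'_3=123a_{k-4}\cdots a_1$ sits at a position strictly below $k-3$, so $i(\alpha'_3)<k-3$ and $\alpha_3$ is \emph{not} the tree-parent of $\alpha'_3$ in $\mathcal T_k$. Fortunately, Theorem~\ref{j} does not assume any parent relation between $\alpha_j$ and $\alpha'_j$; it only requires the prefix pattern $1\cdots(j-1)(j-1)$ versus $1\cdots(j-1)j$, which your verification of the germ conditions handles correctly. (Also, the nonvacuous range is $k\ge 4$ rather than $k\ge 3$, since a $(k-1)$-string with prefix $122$ needs $k-1\ge 3$.)
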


\begin{example}
Applying Corollary~\ref{3} to $\alpha_3=122,1220,1221,1222,1223$ with respective $h(\alpha_3)=-3,-4,2,1,0\in\Phi_3$ yields $\alpha'_3=123,1230,1231,1232,1233$ with respective $h(\alpha'_3)=0,0,-2,-3,-4$. In Figure~\ref{fig5}, the RGS's $\alpha_3$ are shown in thick black while the corresponding RGS's $\alpha'_3$ are shown in thick red. Moreover, Figure~\ref{fig5} extends this font treatment for $k\le 7$. For $k=7$, numbers in Italics in Figure~\ref{fig5} corresponds to members of $\Phi_4$.
\end{example}

Both the integer-valued functions $i=i(\alpha)$ of Theorem~\ref{thm1} and $h=h(\alpha)$ of display (\ref{(1)}) have the same domain, ${\mathcal S}\!\setminus\!\beta(0)$.
A {\it partition of a string} $A$ is a sequence of substrings $A_1,A_2,\ldots,A_n$ whose concatenation $A_1|A_2|\cdots|A_n$ is equal to $A$.

\begin{theorem}\label{alfin} The following items hold. \begin{enumerate}\item[\bf(A)]
The node set of $\mathcal T_{k+1}$ is given by the string $A_k^k=A_1^1|A_2^2|\cdots |A_{k-1}^{k-1}|$ $A_{k-1}^k$, with partition $\{A_1^1,A_2^2,\ldots,A_{k-1}^{k-1},A_{k-1}^k\}$, each $A_i^j$ as a column set  as in Table~\ref{tab2} and Figures~\ref{fig3}--\ref{fig5}, refined by splitting the last column $A_{k-2}^k$ of $A_{k-1}^k$ into the set $B_{k-2}^k$ of its first $k-1$ entries and the set $C_{k-2}^k$ of its last entry, $a_{k-1}a_{k-2}\cdots a_1=12\cdots (k-1)$. The sizes $|A_1^1|$, $|A_2^2|$, $\ldots$, $|A_{k-1}^{k-1}|$, $|B_{k-2}^k|$, $|C_{k-2}^k|$ form the line $\Delta'_{k-1}$ of $\Delta'$.
\item[\bf(B)]
The sequence $h({\mathcal S}\!\setminus\!\beta(0))$ is generated by stepwise consideration of the trees $\mathcal T_{k+1}$, ($1\le k\in\mathbb{Z}$).
In the $k$-th step, the determinations in Theorems~\ref{1} and \ref{j} are to be performed in the natural order of the $(k+1)$-germs $\alpha_j$.  More specifically,
the $k$-step completes those determinations, namely $(\alpha_j,h(\alpha_j))\rightarrow(\alpha'_j,h(\alpha'_j))$,
for the lines of $\Delta'$ corresponding to the sets $A_j^j$  ($j=1,\ldots,k-1$),
and ends up with the determinations $(\alpha_k,h(\alpha_k))\rightarrow(\alpha'_k,h(\alpha'_k))$ in the line corresponding to $B_{k-2}^k$ and $(\alpha_{k+1},h(\alpha_{k+1}))\rightarrow(\alpha'_{k+1},h(\alpha'_{k+1}))$ in the final line, corresponding to $C_{k-2}^k$.\end{enumerate}
\end{theorem}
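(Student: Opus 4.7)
The plan is to establish both items by strong induction on $k$, building on the inclusion chain $\mathcal T_2\subset\mathcal T_3\subset\cdots$ of Lemma~\ref{infty}, the recursive definition of the strings $A_i^j$ in Table~\ref{tab1}, and the universal single-step update rules of Theorems~\ref{1} and~\ref{j}.

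For item (A), the base case $k=1$ trivially identifies $\mathcal T_2$ with $A_1^1=*|1_1$. For the inductive step, I would split $V(\mathcal T_{k+1})=V(\mathcal T_k)\cup(V(\mathcal T_{k+1})\setminus V(\mathcal T_k))$. The second set consists exactly of the length-$k$ RGS's, i.e., the germs with leading entry $1$; under the depth-first ordering of $\mathcal S$ these occupy a contiguous suffix of length $C_{k+1}-C_k$ that is the depth-first traversal of the subtree rooted at $10^{k-1}$. Stripping the leading $1$ identifies this subtree, as an ordered rooted tree, with $\mathcal T_k$ itself, which is precisely the recursive pattern encoded by $A_{k-1}^k=(k-1)_k|1_1|A_2^2|\cdots|A_{k-2}^{k-2}|A_{k-2}^{k-1}|A_{k-2}^k$. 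Combined with the inductive decomposition $V(\mathcal T_k)=A_1^1|A_2^2|\cdots|A_{k-1}^{k-1}$, this yields the claimed expression for $V(\mathcal T_{k+1})$. The size assertion then follows by verifying that the block sizes satisfy the same Pascal-type recurrence as the ballot numbers $\tau_j^n$ of $\Delta'$; the refinement $A_{k-2}^k=B_{k-2}^k\cup C_{k-2}^k$ isolates the rightmost-deepest leaf $12\cdots(k-1)$, producing sizes that are exactly the entries of $\Delta'_{k-1}$.

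For item (B), the sequence $h(\mathcal S\setminus\beta(0))$ is produced stepwise. At the $k$-th step, the inductive hypothesis supplies $h(\alpha)$ for every node already enumerated in $\mathcal T_k$, filling the positions associated with $A_1^1|A_2^2|\cdots|A_{k-1}^{k-1}$. The new block $A_{k-1}^k$ is then traversed in the natural order prescribed by the recursion of Table~\ref{tab1}; each new node $\alpha'_j$ is the image, under the germ-map of Theorem~\ref{j} (with $j=1$ handled by Theorem~\ref{1}), of a unique already-processed $\alpha_j$. The dichotomy of those theorems gives $h(\alpha'_j)=k-h(\alpha_j)$ when $\alpha_j\in\Phi_j$ and $h(\alpha'_j)=h(\alpha_j)$ otherwise, so each new value is determined by a finite, $k$-independent local rule. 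Splitting the last column $A_{k-2}^k$ into $B_{k-2}^k$ and $C_{k-2}^k$ separates the penultimate determinations from the very last one, which by Theorem~\ref{0}(2) assigns $h(12\cdots k)=0$, completing the $k$-step.

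The main obstacle will be the bookkeeping: confirming that the recursion of Table~\ref{tab1} tracks precisely the depth-first enumeration of $\mathcal T_{k+1}\setminus\mathcal T_k$, that the subsequences $\Phi_j$ of Subsection~\ref{5dogs} isolate exactly those nodes at which the nontrivial update $k-h(\alpha_j)$ applies (rather than the trivial one $h(\alpha_j)$), and that the block sizes coincide with the entries of $\Delta'_{k-1}$. Once these alignments are certified, each inductive step reduces to a finite sequence of invocations of Theorems~\ref{0},~\ref{1},~\ref{j} and Corollary~\ref{cuc}, and the size statement becomes a formal consequence of the ballot-number recurrence.
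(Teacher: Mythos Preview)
Your inductive framework is reasonable and in spirit matches what the paper does (its proof is a two-sentence sketch that simply says item~(A) is the representation via $A_k^k$ and $\Delta'_{k-1}$, and item~(B) follows by integrating Theorems~\ref{1} and~\ref{j} as prescribed). However, there is a concrete error in your inductive step for item~(A).

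You claim that stripping the leading $1$ from each node of $V(\mathcal T_{k+1})\setminus V(\mathcal T_k)$ identifies that subtree with $\mathcal T_k$ itself. This is false already by cardinality: the new block has $C_{k+1}-C_k$ nodes, while $\mathcal T_k$ has $C_k$ nodes (for $k=3$ these are $9$ versus $5$). The reason is that after stripping the leading $1$ the next entry $a_{k-1}$ may be $0$, $1$, or $2$ (since the constraint is $a_{k-1}\le a_k+1=2$), whereas in a $k$-germ the leftmost entry lies in $\{0,1\}$. So the stripped strings are \emph{not} $k$-germs, and the subtree rooted at $10^{k-1}$ is not isomorphic to $\mathcal T_k$. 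Consequently your identification of $A_{k-1}^k$ with ``$\mathcal T_k$ itself'' collapses; in fact $A_{k-1}^k$ carries one more block ($A_{k-2}^k$) than $A_{k-1}^{k-1}$, which is exactly the discrepancy you are missing.

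The repair is to run the induction not on the trees $\mathcal T_k$ alone but on the full family $A_i^j$ with $i\le j$: show that $A_{k-1}^k$ encodes the depth-first traversal of the subtree rooted at $10^{k-1}$ by decomposing that subtree according to the value of $a_{k-1}\in\{0,1,2\}$ and matching each piece to the appropriate $A_{k-2}^{\,\cdot}$. Once this two-parameter recursion is in place, the size identity with $\Delta'_{k-1}$ and the stepwise generation of $h(\mathcal S\setminus\beta(0))$ go through essentially as you outline, via Theorems~\ref{1} and~\ref{j}.
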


\begin{proof}
Item (A) represents the set of nodes of $\mathcal T_{k+1}$ via $A_k^k$ and $\Delta'_{k-1}$. This is used in item (B) to express the stepwise nature of the generation of the sequence $h({\mathcal S}\!\setminus\!\beta(0))$. The methodology in the statement is obtained by integrating steps applying Theorems~\ref{1} and \ref{j} in the way prescribed, that yields the correspondence with the lines of $\Delta'$.\end{proof}

\begin{example}
Theorem~\ref{alfin} is exemplified via Table~\ref{tab5}, where the lists corresponding to $\mathcal T_2$, $\mathcal T_3$ and $\mathcal T_4$ are represented according to the respective pairs $(\alpha,h(\alpha))$ indicating column pairs $(\alpha,h(\alpha))$ and $(\alpha_j,h(\alpha_j))$, for $j=1,2,3,4,5$, as shown in the heading line of the table.

The first pair, $(\alpha,h(\alpha))$ shows RGS's $\alpha$ in each case and their corresponding $h(\alpha)$. The following pair, $(\alpha_1,h(\alpha_1))$, shows the $k$-germs $\alpha_1$ corresponding to the RGS's $\alpha$ of the first column and $h(\alpha_1)=h(\alpha)$ but in bold trace if corresponding to a yellow square as in Figure~\ref{fig3}; in that case, the subsequent determinations $(\alpha_1,h(\alpha_1))\rightarrow(\alpha'_1,h(\alpha'_1))$ have the corresponding $h(\alpha'_1)$ in Italics.
This is the case of $h(01)=0$ in bold trace and $h(11)=-2$ in Italics, that we may indicate ``$h(01)=0\rightarrow_1(11)=-2$''.
If a determination $(\alpha_2,h(\alpha_2))\rightarrow(\alpha'_2,h(\alpha'_2))$ happens, then the numbers in Italics are assigned on their right to numbers in bold trace, again.
The cases with bold trace and Italics in Table~\ref{tab5} can then be summarized as follows:
$$\begin{array}{ll}
^{h(01)=0\rightarrow_1h(11)=-2\rightarrow_2h(12)=0,}
_{h(010)=0\rightarrow_1h(110)=-3\rightarrow_2h(120)=0,}&
^{h(011)=-2\rightarrow_1h(111)=1\rightarrow_2h(121)=-2,}
_{h(1000)=0\rightarrow_1h(1100)=-4\rightarrow_2h(1200)=0,}\\
^{h(0110)=-3\rightarrow_1h(1110)=1\rightarrow_2h(1210)=-3,}
_{h(0111)=1\rightarrow_1h(1111)=-3\rightarrow_2h(1211)=1,}&
^{h(1121)=-2\rightarrow_2h(1221)=2\rightarrow_3h(1231)=-2,}
_{h(0122)=-3\rightarrow_1h(1122)=1\rightarrow_2h1222)=1\rightarrow_3h(1232)=-3,}\\
^{h(1223)=0\rightarrow_3h(1233)=-4\rightarrow_4h(1234)=0.}&\end{array}$$
\end{example}

\begin{corollary}\label{coro}
The sequence of pairs $(a(S\!\setminus\!\beta(0)),h(S\!\setminus\!\beta(0)))$ allows to retrieve any vertex $v$ in $O_k$ (resp., $M_k$) by locating its oriented $n$- (resp., $2n$-) cycle in the cycle-factor of \cite{D3,u2f} or in the $\mathbb{Z}_n$- (resp., $\mathbb{D}_n$-) classes as in Section~\ref{s1}, and then locating $v$ departing from the anchored Dyck word in such cycle or class; the sequence also allows to enlist all such vertices $v$ by ordering their cycles (resp., classes), including all vertices in each such cycle (resp.,  class), starting with the corresponding anchored Dyck word.
\end{corollary}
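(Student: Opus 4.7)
The plan is to piece together the preceding machinery. First, Theorem~\ref{alfin} item (B) says that the interleaved sequence of locations $a(\mathcal S\!\setminus\!\beta(0))$ and updates $h(\mathcal S\!\setminus\!\beta(0))$, applied in the natural order of $\mathcal S$, performs a single-entry modification of the running signature at each non-root node of each $\mathcal T_{k+1}$ and therefore, starting from the root signature $12\cdots(k-1)$, produces the signature $A(\alpha)$ of every $k$-germ $\alpha$ for every $k\ge 2$. Concretely, a reader consuming the sequence in order recovers, one after another, the signatures of all RGS's in $\mathcal S$, and thus, by the bijection of Theorem~\ref{signest}, the Dyck nests $F(\alpha)$ themselves via the stage-wise placement of pairs $(j',j'')$ from right to left prescribed there.

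Second, from each reconstructed $F(\alpha)$, the anchored Dyck word $f(\alpha)$ is read off by replacing first (respectively second) appearances of positive integers by $0$-bits (respectively $1$-bits), as in Section~\ref{nat}. Now Section~\ref{s1} identifies $f(\alpha)$ as a distinguished representative of a $\mathbb{Z}_n$-class of $V(O_k)$, which is simultaneously a vertex of the oriented $n$-cycle of the 2-factor of \cite{D3,u2f}; the $n$ translates $f(\alpha).j$ ($j\in\mathbb{Z}_n$) exhaust the remaining vertices of that cycle. For $M_k$, the pullback $\Theta^{-1}$ turns $\{f(\alpha),\aleph(f(\alpha))\}$ together with its $\mathbb{Z}_n$-class into the associated $\mathbb{D}_n$-class and its $2n$-cycle, whose $2n$ vertices arise by letting $\mathbb{D}_n\supset\mathbb{Z}_n$ act on the anchor.

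To retrieve an arbitrary vertex $v$, I would first enumerate the sequence until the anchored Dyck word $f(\alpha)$ whose $\mathbb{Z}_n$-class (respectively $\mathbb{D}_n$-class) contains $v$ is produced; then locate $v$ within that class by identifying the unique translation $j\in\mathbb{Z}_n$ (respectively the unique dihedral element) that carries $f(\alpha)$ to $v$. To enumerate all vertices, I would iterate this construction over the whole sequence and, within each produced cycle or class, list the $n$ (or $2n$) translates in their cyclic (dihedral) order starting at the anchor.

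The main obstacle is confirming that the natural order of $\mathcal S$, restricted to any prefix of Catalan length $C_{k+1}$, is in bijection with the node set of $\mathcal T_{k+1}$ and, through Theorem~\ref{signest}, with the full collection of $\mathbb{Z}_n$-classes of $V(O_k)$ (and hence with the cycles of the 2-factor), so that no class is missed or repeated as one passes from $O_k$ to $O_{k+1}$. This is exactly what Lemma~\ref{infty}, the equality $|\text{nodes of }\mathcal T_{k+1}|=C_{k+1}$, and the bijection of Theorem~\ref{signest} deliver, so the corollary follows by assembling the three steps above into a single retrieval/enumeration procedure.
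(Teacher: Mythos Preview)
Your proposal is correct and follows essentially the same route as the paper's proof: use the pair $(a,h)$ via Theorem~\ref{alfin} to generate the successive signatures, invert the signature to recover the Dyck nest and hence the anchored Dyck word, and then fill out each cycle or class by cyclic (respectively dihedral) translation of that anchor. Your write-up is more explicit---in particular you invoke Theorem~\ref{signest} for the signature-to-nest inversion (the paper cites Theorem~\ref{id} at that step, though the actual reconstruction procedure lives in the proof of Theorem~\ref{signest}) and you spell out the Catalan-length bijectivity check via Lemma~\ref{infty}---but the underlying argument is the same.
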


\begin{proof} The function $a(S\!\setminus\!b(0))$, arising from Theorem~\ref{thm1}, yields the required update locations, while the function $h({\mathcal S}\!\setminus\!\beta(0))$ yields the specific updates, as determined in Theorem~\ref{alfin}. This produces the corresponding signatures. Then, Theorem~\ref{id} allows to recover the original Dyck words from those signatures, and thus the vertices of $O_k$ (resp., $M_k$) by local translation in their containing cycles in the mentioned cycle-factors, or cyclic (resp., dihedral) classes.
\end{proof}

\subsection{Asymptotic behavior}\label{Remarque}

It is known that asymptotically the Catalan numbers $C_k$ grow as $\frac{4^k}{k^{\frac{3}{2}}\sqrt{\pi}}$, which is the limit of the single-update process that takes to the determination of all  Dyck words of length $n=2k+1$, as $k$ tends to infinity. By Corollary~\ref{coro}, an orderly determination of all the vertices of $O_k$, resp., $M_k$, is then asymptotically $\frac{4^k}{k^{\frac{3}{2}}\sqrt{\pi}}(2k+1)$, resp., $\frac{4^k}{k^{\frac{3}{2}}\sqrt{\pi}}(4k+2)$.

\end{document}